\newtheorem{theorem}{Theorem} [section]
\newtheorem{lemma}[theorem]{Lemma}
\newtheorem{proposition}[theorem]{Proposition}
\newtheorem{remark}[theorem]{Remark}
\DeclareMathOperator*{\supp}{supp}
\newcommand{\noi}{\noindent}
\newcommand{\Z}{\mathbb{Z}}
\newcommand{\R}{\mathbb{R}}
\newcommand{\T}{\mathbb{T}}
\let\P= \undefined
\newcommand{\P}{\mathbf{P}}
\newcommand{\N}{\mathcal{N}}
\newcommand{\F}{\mathcal{F}}
\newcommand{\al}{\alpha}
\newcommand{\dl}{\delta}
\newcommand{\nb}{\nabla}
\newcommand{\Dl}{\Delta}
\newcommand{\eps}{\varepsilon}
\newcommand{\g}{\gamma}
\newcommand{\G}{\Gamma}
\newcommand{\ld}{\lambda}
\newcommand{\s}{\sigma}
\newcommand{\ft}{\widehat}
\newcommand{\wt}{\widetilde}
\newcommand{\dt}{\partial_t}
\renewcommand{\l}{\ell}
\renewcommand{\o}{\omega}
\renewcommand{\O}{\Omega}
\newcommand{\les}{\lesssim}
\newcommand{\ges}{\gtrsim}
\newcommand{\jb}[1]
{\langle #1 \rangle}
\numberwithin{equation}{section}
\numberwithin{theorem}{section}
\begin{document}

\baselineskip = 15pt

\title[Randomization on unbounded domains]
{Wiener randomization on unbounded domains
and an application to almost sure well-posedness  of NLS}

\author[\'A.~B\'enyi, T.~Oh, and O.~Pocovnicu]
{\'Arp\'ad  B\'enyi, Tadahiro Oh, and Oana Pocovnicu}

\address{
\'Arp\'ad  B\'enyi\\
Department of Mathematics\\
 Western Washington University\\
 516 High Street, Bellingham\\
  WA 98225\\ USA}
\email{arpad.benyi@wwu.edu}

\address{
Tadahiro Oh\\
School of Mathematics\\
The University of Edinburgh,
and The Maxwell Institute for the Mathematical Sciences\\
James Clerk Maxwell Building\\
The King's Buildings\\
Mayfield Road\\
Edinburgh\\
EH9 3JZ, United Kingdom}

\email{hiro.oh@ed.ac.uk}

\address{
Oana Pocovnicu\\
School of Mathematics\\
Institute for Advanced Study\\
Einstein Drive, Princeton\\ NJ 08540\\ USA}

\email{pocovnicu@math.ias.edu}

\subjclass[2010]{42B35, 35Q55}

\keywords{Nonlinear Schr\"odinger equation; almost sure well-posedness; modulation space; Wiener decomposition,
Strichartz estimate; Fourier restriction norm method}

\thanks{This work is partially supported by a grant from the Simons Foundation (No.~246024 to \'Arp\'ad B\'enyi).}

\thanks{This material is based upon work supported by the National Science Foundation under agreement No. DMS-1128155. 
Any opinions, findings, and conclusions or recommendations expressed in this material are those of the author and do not necessarily reflect the views of the National Science Foundation.}

\begin{abstract}
We consider a randomization of a function on $\R^d$ that is naturally associated to the Wiener decomposition and, intrinsically, to the modulation spaces. Such randomized functions enjoy better integrability, thus allowing us to improve the Strichartz estimates for the Schr\"odinger equation. As an example, we also show that the energy-critical cubic nonlinear Schr\"odinger equation on $\R^4$ is almost surely locally well-posed with respect to randomized initial data below the energy space.
\end{abstract}

%
\maketitle
%

\baselineskip = 15pt

\section{Introduction}

\subsection{Background}
The Cauchy problem of the nonlinear Schr\"odinger equation (NLS):
\begin{equation}
\begin{cases}\label{NLS1}
i \partial_t u + \Delta u = \pm |u|^{p-1}u \\
u\big|_{t = 0} = u_0 \in H^s(\R^d),
\end{cases}
\qquad ( t, x) \in \R \times \R^d
\end{equation}

\noi
has been studied extensively
over recent years.
One of the key ingredients in studying \eqref{NLS1}
is the dispersive effect of the associated linear flow.
Such dispersion is
often expressed in terms of
the Strichartz estimates
(see Lemma \ref{LEM:Str} below),
which have played an important role in
studying various problems on \eqref{NLS1},
in particular,  local and global well-posedness
issues.

It is  well-known that \eqref{NLS1} is invariant under several symmetries.
In the following, we concentrate on the dilation symmetry.
The dilation symmetry states that if $u(t, x)$ is a solution to \eqref{NLS1}
on $\R^d$ with an initial condition $u_0$, then
$u^\ld(t, x) = \ld^{-\frac{2}{p-1}} u (\ld^{-2}t, \ld^{-1}x)$
is also a solution to \eqref{NLS1} with the $\ld$-scaled initial condition
$u_0^\ld(x) = \ld^{-\frac{2}{p-1}} u_0 (\ld^{-1}x)$.
Associated to the dilation symmetry,
there is a  scaling-critical Sobolev index $s_c := \frac d2 - \frac{ 2}{p-1}$
such that the homogeneous $\dot{H}^{s_c}$-norm is invariant
under the dilation symmetry.
For example, when $p = \frac{4}{d-2}+1$,
we have $s_c = 1$ and \eqref{NLS1} is called energy-critical.
It is known that \eqref{NLS1} is ill-posed
in the supercritical regime, that is, in $H^s$ for $s < s_c$;
see \cite{CCT, BGT2, Carles, AC}.

In an effort to study the invariance of the Gibbs measure
for the defocusing (Wick ordered) cubic NLS on $\T^2$,
Bourgain \cite{BO7} considered random initial data of the form:
\begin{equation}
 u_0^\omega(x) = \sum_{n \in \Z^2} \frac{g_n(\o)}{\sqrt{1+|n|^2}}e^{i n \cdot x},
\label{I1}
 \end{equation}

\noi
where $\{g_n\}_{n \in \Z^2}$ is a sequence of independent
complex-valued standard Gaussian random variables.
The function \eqref{I1} represents a typical
element in the support of the Gibbs measure,
more precisely,  in the support of the Gaussian free field on $\T^2$
associated to this Gibbs measure,
and is critical with respect to the scaling.
With a combination of deterministic PDE techniques
and probabilistic arguments,
Bourgain showed that
 the (Wick ordered) cubic NLS on $\T^2$ is well-posed
almost surely with respect to random initial data \eqref{I1}.
Burq-Tzvetkov \cite{BT2} further explored
the study of Cauchy problems with more general random initial data.
They considered the cubic nonlinear wave equation (NLW)
on a three dimensional compact Riemannian manifold $M$ without a boundary,
where the scaling-critical Sobolev index $s_c$ is given by $s_c = \frac 12 $.
Given $u_0(x) = \sum_{n = 1}^\infty  c_n e_n(x) \in H^s(M)$, $s\geq \frac 14$,
they proved almost sure local well-posedness
with random initial data of the form:\footnote{For NLW, one needs to specify
$(u, \dt u)|_{t = 0}$ as an initial condition. For simplicity of presentation, we
only displayed $u|_{t = 0}$ in \eqref{I2}.}
\begin{equation}
u_0^\omega (x) = \sum_{n = 1}^\infty g_n (\omega) c_n e_n(x)
\label{I2}
\end{equation}
	
\noi
where $\{g_n\}_{n = 1}^\infty$ is a sequence of independent mean-zero
random variables with a uniform bound on the fourth moments
and $\{e_n\}_{n = 1}^\infty$ is an orthonormal basis of $L^2(M)$
consisting of the eigenfunctions of the Laplace-Beltrami operator.
It was also shown that $u_0^\o$ in \eqref{I2}
has the same Sobolev regularity as the original function $u_0$
and is not smoother, almost surely.
In particular, if $u_0 \in H^s(M) \setminus H^\frac{1}{2}(M)$,
their result implies almost sure local well-posedness
in the supercritical regime. There are several works on Cauchy problems of evolution equations with random data that followed these results,
including some on almost sure global well-posedness:
\cite{Bo97, Thomann, CO, BTT,  Deng, DS1, BT3, NPS, DS2, R, BTT2,  BB1, BB2, NS, PRT, LM}.

We point out that many of these works are on compact domains,
where there is  a  countable basis of eigenfunctions of the Laplacian
and thus there is a natural way to introduce a randomization.
On $\R^d$, randomizations were introduced with respect to a countable basis of eigenfunctions of the Laplacian with a confining potential such as a harmonic oscillator $\Dl - |x|^2$; we note that functions in Sobolev spaces associated to the Laplacian with a confining potential have an extra decay in space. Our goal is to introduce a randomization for functions in the usual Sobolev spaces on $\R^d$ without such extra decay.
For this purpose, we first review some basic notions and facts concerning the so-called \emph{modulation spaces} of time-frequency analysis.

\subsection{Modulation spaces}
The modulation spaces were introduced by Feichtinger \cite{Fei}
in early eighties. In following collaborations with Gr\"ochenig \cite{FG1, FG2}, they established the basic theory of these function spaces, in particular their invariance, continuity, embeddings, and convolution properties. The difference between the Besov spaces and the modulation spaces consists in the geometry of the frequency space employed: the dyadic annuli in the definition of the former spaces are replaced by unit cubes $Q_n$ centered at $n\in \Z^d$ in the definition of the latter ones. Thus, the modulation spaces arise via a uniform partition of the frequency space
$\R^d = \bigcup_{n \in \Z^d} Q_n$,
which is commonly referred to as a~{\it Wiener decomposition} \cite{W}.
In certain contexts, this decomposition allows for a finer analysis by effectively capturing the time-frequency
concentration of a distribution.

For $x, \xi\in \R^d$, let $\mathcal Fu (\xi)=\widehat u(\xi)=\int_{\R^d} u(x)e^{-2\pi ix\cdot\xi}\,dx$ denote the Fourier transform of a distribution $u$. Typically, the (weighted) modulation spaces $M^{p, q}_s(\R^d)$, $p, q>0, s\in \R$, are defined by imposing the $L^p(dx)L^q(\jb{\xi}^s d\xi)$ integrability of the short-time (or windowed) Fourier transform of a distribution $V_\phi u(x, \xi) :=\mathcal F(u\overline{T_x\phi})(\xi)$.
Here,
$\langle \xi\rangle^s=(1+|\xi|^2)^{\frac s2}$,
$\phi$ is some fixed non-zero Schwartz function, and $T_x$ denotes the translation
defined by $T_x(\phi)(y) = \phi(y - x)$.
When $s=0$, one simply writes $M^{p,q}$. Modulation spaces satisfy some desirable properties: they are quasi-Banach spaces, two different windows $\phi_1, \phi_2$ yield equivalent norms, $M_s^{2,2}(\R^d)=H^s(\R^d)$,
$(M_s^{p,q}(\R^d))'=M_{-s}^{p',q'}(\R^d)$,
$M_{s_1}^{p_1, q_1}(\R^d)\subset M_{s_2}^{p_2, q_2}(\R^d)$
for $s_1 \geq s_2$, $p_1\leq p_2$,
and $q_1\leq q_2$, and $\mathcal S(\R^d)$ is dense
in $M_s^{p,q}(\R^d)$.

We prefer to use an equivalent norm on the modulation space $M^{p, q}_s$, which is induced by a corresponding Wiener decomposition of the frequency space. Given $\psi \in \mathcal{S}(\R^d)$ such that $\supp \psi \subset [-1,
1]^d$ and $\sum_{n \in \Z^d} \psi(\xi -n) \equiv 1$,
let
\begin{equation}
\|u\|_{M_s^{p, q}(\R^d)} = \big\| \jb{n}^s \|\psi(D-n) u
\|_{L_x^p(\R^d)} \big\|_{\l^q_n(\mathbb{Z}^d)}.
\label{mod2}
\end{equation}

\noi
Note that $\psi(D-n)$ is just a Fourier multiplier with symbol $\chi_{Q_n}$ conveniently smoothed:
\[\psi(D-n)u(x)=\int_{\R^d}\psi(\xi-n)\widehat u(\xi)e^{2\pi ix\cdot \xi}\,d\xi.\]
It is worthwhile to compare the definition \eqref{mod2}
with the one for the Besov spaces which uses a dyadic partition of the frequency domain. Let $\varphi_0, \varphi \in \mathcal{S}(\R^d)$ such that
$\supp \varphi_0 \subset \{ |\xi| \leq 2\}$, $\supp \varphi \subset
\{ \frac{1}{2}\leq |\xi| \leq 2\}$, and $ \varphi_0(\xi) + \sum_{j =
1}^\infty \varphi(2^{-j}\xi) \equiv 1.$ With $\varphi_j(\xi) =
\varphi(2^{-j}\xi)$,
we define  the Besov spaces $B_s^{p, q}$
via the norm
\begin{equation} \label{besov1}
\|u\|_{B_s^{p, q}(\R^d) } = \big\| 2^{js} \|\varphi_j(D) u
\|_{L^p(\R^d)} \big\|_{\l^q_j(\mathbb{Z}_{\geq 0})}.
\end{equation}

\noi
There are several known embeddings between the Besov, Sobolev, and
modulation spaces; see,  for example,  Okoudjou \cite{Ok}, Toft \cite{To}, Sugimoto-Tomita \cite{suto2},  and Kobayashi-Sugimoto \cite{kosu}.

\subsection{Randomization adapted to the Wiener decomposition}
Given a function $\phi$ on $\R^d$,
we have
\[ \phi = \sum_{n \in \Z^d} \psi(D-n) \phi,\]

\noi
where $\psi(D-n)$ is defined above.
 The identity above leads to a randomization that is
naturally associated to the Wiener decomposition, and hence to the modulation spaces, as follows.
Let $\{g_n\}_{n \in \Z^d}$ be a sequence of independent mean zero complex-valued random variables
on a probability space $(\O, \F, P)$,
where the real and imaginary parts of $g_n$ are independent
and endowed
with probability distributions $\mu_n^{(1)}$ and $\mu_n^{(2)}$.
Then, we can define the \emph{Wiener randomization of $\phi$} by
\begin{equation}
\phi^\omega : = \sum_{n \in \Z^d} g_n (\omega) \psi(D-n) \phi.
\label{R1}
\end{equation}

 We note that L\"uhrmann-Mendelson \cite{LM} also 
considered a similar randomization of the form \eqref{R1} in the study of NLW on $\R^3$. See Remark \ref{REM:LM} below. The randomization in \cite{LM} stems from yet another one  used by Zhang and Fang \cite{ZF} in their study of the Navier-Stokes equations.
We point out, however,  that the main purpose of our paper is to explain
how the randomization of the form \eqref{R1} is naturally associated to the Wiener decomposition and hence the modulation spaces. See also our previous paper \cite{BP} in the periodic setting. Thus, from the perspective of time-frequency analysis, the Wiener randomization seems to be the ``right" one.

In the sequel, we make the following assumption; there exists $c>0$ such that
\begin{equation}
\bigg| \int_{\R} e^{\g x } d \mu_n^{(j)}(x) \bigg| \leq e^{c\g^2}
\label{R2}
\end{equation}
	
\noi
for all $\g \in \R$, $n \in \Z^d$, $j = 1, 2$.
Note that \eqref{R2} is satisfied by
standard complex-valued Gaussian random variables,
standard Bernoulli random variables,
and any random variables with compactly supported distributions.

It is easy to see that, if $\phi \in H^s(\R^d)$,
then  the randomized function $\phi^\o$ is
almost surely  in $H^s(\R^d)$; see Lemma \ref{LEM:Hs} below.
One can also show that there is no smoothing upon randomization
in terms of differentiability;
see, for example, Lemma B.1 in \cite{BT2}.
Instead, the main point of this randomization is
its improved integrability;
 if $\phi \in L^2(\R^d)$,
then  the randomized function $\phi^\o$ is
almost surely  in $L^p(\R^d)$ for any finite $p \geq 2$;
see Lemma \ref{LEM:Lp} below. Such results for random Fourier series
are known as  Paley-Zygmund's theorem \cite{PZ};
see also Kahane's book \cite{Kahane} and Ayache-Tzvetkov \cite{AT}.

\begin{remark}\rm
One may fancy a randomization associated to Besov spaces,
of the form:
\begin{equation*}
\phi^\omega : = \sum_{j = 0}^\infty g_n (\omega) \varphi(D) \phi.
\end{equation*}

\noi
In view of the Littlewood-Paley theory,
such a randomization does not yield any improvement
on differentiability or integrability
and thus it is of no interest.
\end{remark}

\subsection{Main results.} The Wiener randomization of an initial condition allows us to establish some improvements
of the Strichartz estimates. In turn, these probabilistic Strichartz estimates yield an almost sure well-posedness result for NLS.
First, we recall the usual Strichartz estimates on $\R^d$ for the reader's convenience.
We say that
a pair  $(q, r)$ is \emph{Schr\"odinger admissible}
if it satisfies
\begin{equation}
\frac{2}{q} + \frac{d}{r} = \frac{d}{2}
\label{Str2}
\end{equation}

\noi
 with $2\leq q, r \leq \infty$
and $(q, r, d) \ne (2, \infty, 2)$.
Let $S(t)= e^{it\Dl}$.
Then, the following Strichartz estimates
are known to hold.

\begin{lemma}[\cite{Strichartz, Yajima, GV, KeelTao}]\label{LEM:Str}
Let $(q, r)$ be  Schr\"odinger admissible.
Then, we have 
\begin{equation}
\| S(t) \phi\|_{L^q_t L^r_x (\R\times \R^d)} \lesssim \|\phi\|_{L^2_x(\R^d)}.
\label{Str1}
\end{equation}

\end{lemma}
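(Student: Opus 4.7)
The standard approach is the $TT^*$ method combined with the pointwise dispersive estimate. First I would establish the pointwise bound
\[
\|S(t)\phi\|_{L^\infty_x(\R^d)} \lesssim |t|^{-d/2}\|\phi\|_{L^1_x(\R^d)},
\]
which follows from the explicit Schr\"odinger kernel $S(t)\phi(x) = (4\pi i t)^{-d/2}\int e^{i|x-y|^2/(4t)}\phi(y)\,dy$. Interpolating this with the trivial unitary bound $\|S(t)\phi\|_{L^2_x}=\|\phi\|_{L^2_x}$ (Plancherel) gives
\[
\|S(t)\phi\|_{L^r_x(\R^d)} \lesssim |t|^{-d\left(\frac12-\frac1r\right)}\|\phi\|_{L^{r'}_x(\R^d)}, \qquad 2 \leq r \leq \infty.
\]

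Next I would dualize. Setting $T\phi := S(t)\phi$, the estimate \eqref{Str1} is equivalent by $TT^*$ to the boundedness of
\[
TT^*F(t,x) = \int_\R S(t-s)F(s,\cdot)(x)\,ds \;:\; L^{q'}_t L^{r'}_x \to L^q_t L^r_x.
\]
Applying Minkowski's inequality in $x$ and the dispersive estimate above, this reduces to the one-dimensional Hardy--Littlewood--Sobolev inequality on $\R$ for convolution with the kernel $|t|^{-d(\frac12-\frac1r)}$. A direct computation checks that the HLS exponents match precisely when $(q,r)$ satisfies the admissibility relation \eqref{Str2}, and HLS is applicable provided $q > 2$. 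This handles all non-endpoint cases.

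The real obstacle is the endpoint case $q=2$ (which requires $d \geq 3$ and $r = \frac{2d}{d-2}$), since HLS fails for weak-type convolution at the borderline exponent. Here I would invoke the Keel--Tao argument: dyadically decompose $TT^*$ according to $|t-s| \sim 2^j$, obtain bilinear estimates
\[
\bigl|\langle TT^*_j F, G\rangle\bigr| \lesssim 2^{-j\beta(a,b)}\|F\|_{L^2_t L^a_x}\|G\|_{L^2_t L^b_x}
\]
at each scale $j$ by interpolating between the dispersive bound and the energy bound, and then sum in $j$ via bilinear real interpolation at exponents $(a,b)$ slightly off $(r',r')$. The key observation of Keel--Tao is that one can arrange the decay rates $\beta(a,b)$ on both sides of the critical exponent so that the dyadic sum converges, thereby avoiding the weak-type failure. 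This is the most delicate step; for completeness one also verifies the $(q,r,d) = (2,\infty,2)$ exclusion, which genuinely fails by a Knapp-type counterexample.
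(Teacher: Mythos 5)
The paper does not prove this lemma; it is quoted directly from the cited literature (Strichartz, Yajima, Ginibre--Velo, Keel--Tao), and your sketch is exactly the standard argument of those references: dispersive estimate plus $TT^*$ and Hardy--Littlewood--Sobolev off the endpoint, and the Keel--Tao dyadic bilinear interpolation at $q=2$. Your outline is correct and consistent with the sources the paper relies on (the only minor quibble being that the failure of the $(2,\infty,2)$ case is usually established by Montgomery-Smith's randomization argument rather than a Knapp example, but that case is excluded from the statement anyway).
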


Next, we present improvements
of the Strichartz estimates under the Wiener randomization.
Proposition \ref{PROP:R2}
will be then used for a local-in-time theory,
while Proposition \ref{PROP:R5}
is useful for small data global theory.
The proofs of Propositions \ref{PROP:R2} and \ref{PROP:R5} are presented
in Section 2.

\begin{proposition}[Improved local-in-time  Strichartz estimate]\label{PROP:R2}
Given $\phi \in L^2(\R^d)$,
let $\phi^\o$ be its randomization defined in \eqref{R1}, satisfying \eqref{R2}.
Then,
given $2\leq q, r<\infty$,
there exist $C, c>0$ such that
\begin{align}
P\Big( \|S(t) \phi^\omega\|_{L^q_t L^r_x([0, T]\times \R^d)}> \ld\Big)
\leq C\exp\bigg(-c\frac{\ld^2}{T^{\frac{2}{q}}\|\phi\|_{L^2}^{2}}\bigg)
\label{R2aa}
\end{align}
	
\noi
for all $T > 0$ and $\ld > 0$.
\end{proposition}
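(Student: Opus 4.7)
The plan is to first establish the $L^p_\o$-moment bound
\[
\|S(t)\phi^\o\|_{L^p_\o L^q_t L^r_x([0,T]\times \R^d)} \les \sqrt{p}\,T^{1/q}\|\phi\|_{L^2}
\]
valid for every $p \geq \max(q,r)$, and then to convert this bound into the Gaussian-type tail estimate \eqref{R2aa} via Chebyshev's inequality with optimization in $p$.

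For the moment bound, I would proceed as follows. The sub-Gaussian hypothesis \eqref{R2} implies the standard Khintchine-type estimate $\|\sum_n c_n g_n\|_{L^p(\O)} \les \sqrt{p}\,(\sum_n |c_n|^2)^{1/2}$ for every $\{c_n\} \in \l^2(\Z^d)$ and $p \geq 2$. Choosing $p \geq \max(q,r)$ and applying Minkowski's inequality to interchange $L^p_\o$ with $L^q_t L^r_x$, I obtain
\[
\|S(t)\phi^\o\|_{L^p_\o L^q_t L^r_x} \les \sqrt{p}\,\bigg\|\bigg(\sum_n |S(t)\psi(D-n)\phi|^2\bigg)^{1/2}\bigg\|_{L^q_t L^r_x}.
\]
Since $q, r \geq 2$, two further applications of Minkowski's inequality pull the $\l^2_n$ summation outside the space-time norm, producing $\bigl(\sum_n \|S(t)\psi(D-n)\phi\|_{L^q_t L^r_x}^2\bigr)^{1/2}$.

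The key ingredient now is that $\psi(D-n)\phi$ has Fourier support in the unit cube $n + [-1,1]^d$, a property preserved by the Schr\"odinger flow $S(t)$. By Bernstein's inequality---with constant uniform in $n$ because the cubes all have unit size---I can upgrade $L^r_x$ to $L^2_x$:
\[
\|S(t)\psi(D-n)\phi\|_{L^r_x} \les \|S(t)\psi(D-n)\phi\|_{L^2_x} = \|\psi(D-n)\phi\|_{L^2_x},
\]
where the equality uses the $L^2$-unitarity of $S(t)$. Since this bound is independent of $t$, integrating over $t \in [0,T]$ contributes a factor $T^{1/q}$, and finally the almost orthogonality of the Wiener decomposition yields $\bigl(\sum_n \|\psi(D-n)\phi\|_{L^2_x}^2\bigr)^{1/2} \les \|\phi\|_{L^2}$ because the smoothed cutoffs have bounded overlap.

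Collecting these steps gives the moment bound $\|S(t)\phi^\o\|_{L^p_\o L^q_t L^r_x} \leq C\sqrt{p}\,T^{1/q}\|\phi\|_{L^2}$ for every $p \geq \max(q,r)$. A standard Chebyshev argument (choosing $p \sim \ld^2/(T^{2/q}\|\phi\|_{L^2}^2)$ when this quantity exceeds $\max(q,r)$, and handling the regime of small $\ld$ by adjusting the constant $C$) converts the $\sqrt{p}$-growth of moments into the sub-Gaussian tail \eqref{R2aa}. I do not anticipate any essential obstacle: the proof hinges on the feature that the Wiener decomposition partitions frequency space into \emph{unit} cubes, so Bernstein's inequality pays no price in passing from $L^r_x$ to $L^2_x$. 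This is precisely what distinguishes the Wiener randomization from one based on a dyadic Littlewood--Paley partition, where such a Bernstein step would cost a divergent factor in the high-frequency scales.
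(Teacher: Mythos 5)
Your proposal is correct and follows essentially the same route as the paper's proof: a Khintchine-type moment bound from \eqref{R2}, Minkowski's inequality to commute $L^p_\o$ with $L^q_tL^r_x$ and to pull out the $\l^2_n$ sum (using $q,r\geq 2$), the unit-cube Bernstein inequality \eqref{R4} to pass from $L^r_x$ to $L^2_x$, unitarity of $S(t)$ plus H\"older in time for the $T^{1/q}$ factor, and finally the Chebyshev optimization in $p$ as in Lemma \ref{LEM:Hs}. The only cosmetic difference is that the paper keeps the $\l^2_n$ sum inside the $L^q_T$ norm rather than pulling it fully outside, which changes nothing.
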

\noi

In particular,
by setting $\ld = T^\theta \|\phi\|_{L^2}$, we have
\begin{equation*}
\|S(t) \phi^\o\|_{L^q_tL^r_x([0, T]\times \R^d)}
\les T^\theta \|\phi\|_{L^2(\R^d)}
\end{equation*}

\noi
outside a set of probability at most
$ C \exp \big(-c T^{2\theta - \frac{2}{q}}\big).$
Note that, as long as $\theta < \frac{1}{q}$, this probability can be made arbitrarily small by letting $T\to 0$.
Moreover, for fixed $T>0$, we have the following;
given any small $\eps>0$, we have
\[\|S(t) \phi^\o\|_{L^q_tL^r_x([0, T]\times \R^d)}
\leq C_T \Big( \log \frac{1}{\eps}\Big)^{\frac{1}{2}} \|\phi\|_{L^2}\]

\noi
outside a set of probability $< \eps$.

The next proposition states an improvement of the Strichartz estimates in the global-in-time setting.

\begin{proposition}[Improved global-in-time Strichartz estimate]\label{PROP:R5}
Given $\phi \in L^2(\R^d)$,
let $\phi^\o$ be its randomization defined in \eqref{R1}, satisfying \eqref{R2}.
Given a Schr\"odinger admissible pair $(q, r)$ with $q, r < \infty$,
let $\wt {r} \geq r$.
Then, there exist $C, c>0$ such that
\begin{align}
P\Big( \|S(t) \phi^\omega\|_{L^q_t L^{\wt{r}}_x ( \R \times \R^d)} > \ld\Big)
\leq Ce^{-c \ld^2 \|\phi\|_{L^2}^{-2}}
\label{R5a}
\end{align}

\noi
for all $\ld > 0$.
In particular, given any small $\eps > 0$, we have
\[ \|S(t) \phi^\omega\|_{L^q_t L^{\wt{r}}_x ( \R \times \R^d)}
\les \Big( \log \frac{1}{\eps}\Big)^\frac{1}{2} \|\phi\|_{L^2} \]

\noi
outside a set of probability at most $\eps$.

\end{proposition}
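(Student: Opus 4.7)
The plan is to establish a Gaussian-moment bound of the form
\[
\|S(t)\phi^\omega\|_{L^p(\O; L^q_t L^{\wt r}_x(\R\times\R^d))} \les \sqrt{p}\,\|\phi\|_{L^2}
\]
uniformly in $p\geq \max(q,\wt r)$, and then to obtain the tail estimate \eqref{R5a} by Chebyshev's inequality followed by optimization in $p$ (as is standard for sub-Gaussian random series: the hypothesis \eqref{R2} yields a Khintchine-type inequality $\|\sum c_n g_n\|_{L^p(\O)} \les \sqrt{p}\,\|(c_n)\|_{\ell^2_n}$ for all $p\geq 2$).

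For the moment bound, I would first apply Minkowski's integral inequality to exchange the order of integration. Choosing $p \geq \max(q,\wt r)$, one may move the $L^p(\O)$-norm inside:
\[
\|S(t)\phi^\omega\|_{L^p_\o L^q_t L^{\wt r}_x}
\leq \|S(t)\phi^\omega\|_{L^q_t L^{\wt r}_x L^p_\o}.
\]
Next, pointwise in $(t,x)$, the random series $S(t)\phi^\omega(x) = \sum_n g_n(\omega) S(t)\psi(D-n)\phi(x)$ has independent summands satisfying \eqref{R2}, so the Khintchine-type estimate gives
\[
\|S(t)\phi^\omega(x)\|_{L^p_\o}
\les \sqrt{p}\,\Bigl(\sum_{n\in\Z^d} |S(t)\psi(D-n)\phi(x)|^2\Bigr)^{1/2}.
\]
Since $\wt r\geq 2$, a further application of Minkowski in $\ell^2_n$ versus $L^{\wt r}_x$ (and $L^q_t$) yields
\[
\|S(t)\phi^\omega\|_{L^q_t L^{\wt r}_x L^p_\o}
\les \sqrt{p}\,\Bigl(\sum_n \|S(t)\psi(D-n)\phi\|_{L^q_t L^{\wt r}_x}^2\Bigr)^{1/2}.
\]

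The decisive step is the per-cube bound. Each $S(t)\psi(D-n)\phi$ has Fourier support in the unit cube $Q_n$ centered at $n$ (which is preserved under the free Schr\"odinger flow). Since $\wt r\geq r$, Bernstein's inequality on this compact frequency set gives the reverse-inclusion bound $\|S(t)\psi(D-n)\phi\|_{L^{\wt r}_x} \les \|S(t)\psi(D-n)\phi\|_{L^r_x}$, with constants independent of $n$ by translation invariance of the Lebesgue norm in $\xi$. Then the standard Strichartz estimate of Lemma \ref{LEM:Str} applied to the admissible pair $(q,r)$ gives
\[
\|S(t)\psi(D-n)\phi\|_{L^q_t L^{\wt r}_x} \les \|S(t)\psi(D-n)\phi\|_{L^q_t L^r_x}
\les \|\psi(D-n)\phi\|_{L^2_x}.
\]
Finally, Plancherel combined with the bounded overlap of the symbols $\psi(\cdot-n)$ gives the square-function estimate $\bigl(\sum_n \|\psi(D-n)\phi\|_{L^2_x}^2\bigr)^{1/2}\les \|\phi\|_{L^2}$, which closes the moment bound.

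With the bound $\|S(t)\phi^\omega\|_{L^p_\o L^q_t L^{\wt r}_x}\les \sqrt{p}\,\|\phi\|_{L^2}$ in hand, Chebyshev gives
\[
P\bigl(\|S(t)\phi^\omega\|_{L^q_t L^{\wt r}_x} > \ld\bigr)
\leq \ld^{-p}\bigl(C\sqrt{p}\,\|\phi\|_{L^2}\bigr)^p,
\]
and optimizing by taking $p\sim \ld^2/\|\phi\|_{L^2}^2$ produces the stated Gaussian tail \eqref{R5a}; the logarithmic corollary is then immediate by setting $\ld = C(\log(1/\eps))^{1/2}\|\phi\|_{L^2}$. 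The main subtlety (as opposed to Proposition \ref{PROP:R2}) is that we need the improvement at the \emph{global} space-time scale despite inflating the spatial exponent to $\wt r\geq r$; this is what forces the Bernstein step on each unit frequency cube, and is precisely where the Wiener decomposition structure of the randomization \eqref{R1} is essential, since it allows us to gain $L^{\wt r}$ integrability per cube for free while still reassembling via orthogonality in $L^2$.
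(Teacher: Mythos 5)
Your proposal is correct and follows essentially the same route as the paper's proof: Minkowski's inequality to bring $L^p(\O)$ inside, the Khintchine-type bound from \eqref{R2} (Lemma \ref{LEM:R1}), a second application of Minkowski to pull out $\l^2_n$, Bernstein's inequality on each unit frequency cube (the paper's \eqref{R4}) to pass from $L^{\wt r}_x$ to $L^r_x$, the deterministic Strichartz estimate per cube, almost orthogonality to recover $\|\phi\|_{L^2}$, and finally Chebyshev with the optimal choice $p \sim \ld^2\|\phi\|_{L^2}^{-2}$. The only cosmetic difference is the order in which you interchange the $\l^2_n$ and $L^q_t$ norms before applying Bernstein, which does not change the argument.
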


We conclude this introduction by
discussing an example of
almost sure local well-posedness of NLS
with randomized initial data below a scaling critical  regularity.
In the following, we consider
the energy-critical cubic NLS on $\R^4$:
\begin{equation}
\label{NLS4d}i \partial_t u + \Delta u = \pm |u|^{2}u, \quad
\qquad (t, x) \in \R \times \R^4.
\end{equation}

\noi
Cazenave-Weissler \cite{CW}
proved local well-posedness of \eqref{NLS4d}
with initial data in the critical space $\dot H^1(\R)$.
See
Ryckman-Vi\c{s}an \cite{RV}, Vi\c{s}an \cite{Visan},  and  Kenig-Merle \cite{KM} for global-in-time results.
In the following, we state a local well-posedness result
of \eqref{NLS4d} with random initial data below the critical space.
More precisely,
given  $\phi \in H^s(\R^4) \setminus H^1(\R^4)$, $s \in( \frac {3}{5}, 1)$,
and  its randomization $\phi^\o$ defined in \eqref{R1},
we prove that \eqref{NLS4d} is almost surely locally well-posed
with random initial data $\phi^\o$.
Although $\phi$ and its randomization $\phi^\o$ lie
in a supercritical regularity regime, the Wiener randomization essentially makes the  problem {\it subcritical}. This is a common feature for many of the probabilistic well-posedness results.

\begin{theorem}\label{THM:1}
Let $s \in (\frac{3}{5}, 1)$.
Given $\phi \in H^s(\R^4)$, let $\phi^\o$ be its randomization defined in \eqref{R1}, satisfying \eqref{R2}.
Then, the cubic NLS \eqref{NLS4d} on $\R^4$
is almost surely locally well-posed
with respect to the randomization $\phi^\omega$ as initial data.
More precisely,
there exist $ C, c, \g>0$ and $\s = 1+$ such that
for each $T\ll 1$,
there exists a set $\O_T \subset \O$ with the following properties:

\smallskip
\begin{itemize}
\item[\textup{(i)}]
$P(\O\setminus\O_T) \leq C \exp\Big(-\frac{c}{T^{\g} \|\phi\|_{H^s}^{2}}\Big)$,

\item[\textup{(ii)}]
For each $\o \in \O_T$, there exists a (unique) solution $u$
to \eqref{NLS4d}
with $u|_{t = 0} = \phi^\o$
in the class
\[ S(t) \phi^\o + C([-T, T]: H^{\s} (\R^4)) \subset C([-T, T]:H^s(\R^4)).\]

\end{itemize}
\end{theorem}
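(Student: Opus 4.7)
The plan is to follow the Bourgain--Burq--Tzvetkov strategy. Write a solution as $u=z+v$ where $z(t):=S(t)\phi^\o$ is the linear evolution of the random initial data, and $v$ is the nonlinear remainder solving
\[
v(t)=\mp i\int_0^t S(t-t')\big(|z+v|^2(z+v)\big)(t')\,dt',\qquad v\big|_{t=0}=0.
\]
I would choose $\s=1+\delta$ for a small $\delta=\delta(s)>0$ and set up a contraction mapping for $v$ in a Fourier restriction space $X^{\s,b}_T$ with $b=\tfrac12+$ on a small ball. Because $\s>1$ is \emph{subcritical} for \eqref{NLS4d}, the purely deterministic trilinear estimate for $|v|^2 v$ at regularity $H^\s$ produces a positive power of $T$, which is the mechanism that ultimately closes the contraction.

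The good event $\O_T$ would be declared as the intersection of a finite list of Strichartz bounds on $z$: namely $\|z\|_{L^\infty_T H^s_x}\les\|\phi\|_{H^s}$, together with $\|\jb{\nabla}^s z\|_{L^{q_i}_T L^{r_i}_x}\les T^{\theta_i}\|\phi\|_{H^s}$ for a finite collection of Schr\"odinger-admissible pairs $(q_i,r_i)$ with $r_i$ taken large. Applying Proposition~\ref{PROP:R2} to each Littlewood--Paley piece of $\phi$, summing the Gaussian tails over frequency, and taking a union bound over the finite list yields the quantitative probability estimate in~(i) with some $\g>0$.

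With $\o\in\O_T$ fixed the remaining work is purely deterministic. Expand the cubic nonlinearity $|z+v|^2(z+v)$ into eight trilinear terms obtained by selecting $z/\bar z$ or $v/\bar v$ in each slot, and bound each term in $X^{\s,-\frac12+}_T$. The $|v|^2 v$ piece is handled by the standard subcritical trilinear $X^{s,b}$ estimate at regularity $\s>1$. Each mixed term with one or two factors of $v$ is handled by H\"older in $(t,x)$, placing the $z$-factors in the large-$L^{r_i}$ Strichartz norms available on $\O_T$ and the $v$-factors in standard (or bilinear) Strichartz refinements, with the $\s$ derivatives distributed onto $v$ wherever possible.

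The main obstacle is the fully random trilinear term $|z|^2 z$, which must be controlled in $X^{\s,-\frac12+}_T$ with $\s>1$ even though each factor of $z$ only carries $H^s$ regularity and $s<1$. This is exactly where the improved integrability supplied by the Wiener randomization is indispensable: by placing two of the three copies of $z$ in $L^{q_i}_T L^{r_i}_x$ with $r_i$ chosen very large---allowed by Proposition~\ref{PROP:R2}---one can isolate the $\s$ derivatives on the remaining factor $\jb{\nabla}^\s z$ and then control it by an $L^2$-based Strichartz bound at a cost of only $\s-s$ derivatives. Balancing this regularity deficit against the Schr\"odinger admissibility constraint on $\R^4$ is what forces the threshold $s>\tfrac35$. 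Once all eight terms are bounded with a uniform positive power of $T$, a Picard contraction in a small ball of $X^{\s,b}_T$ produces the unique $v\in C([-T,T];H^\s)$ required for~(ii), and $u=z+v$ lies in the stated class.
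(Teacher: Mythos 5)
Your overall architecture coincides with the paper's: the decomposition $u = z + v$ with $z = S(t)\phi^\o$, a contraction for the remainder $v$ in $X^{\s,b}$ with $\s = 1+$ and $b = \frac12+$, a good event $\O_T$ built from the probabilistic Strichartz estimates of Proposition~\ref{PROP:R2} together with Lemma~\ref{LEM:Hs}, the choice $R\sim T^{-\g/2}$ to produce the probability bound in (i), and a case-by-case trilinear analysis of the eight terms in $|z+v|^2(z+v)$. The genuine gap is in your mechanism for the hardest interactions, namely those in which a high-frequency copy of $z$ carries the $\jb{\nb}^\s$ derivative while the remaining factors sit at much lower frequency. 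Test it on $z_1, z_2$ localized at frequency $O(1)$ and $z_3$, together with the dual function $v_4$, localized at frequency $N \gg 1$: your recipe---two factors in $L^{q}_T L^{r}_x$ with $r$ large, the third measured as $\jb{\nb}^\s z_3$ in an $L^2$-based norm---produces a loss $N^{\s - s}$ with $\s - s > 0$ and nothing to cancel it, since large-$r$ norms of the low-frequency factors contribute no negative powers of $N$. No choice of admissible exponents repairs this; the dyadic sum over $N$ simply diverges.

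What closes this case in the paper is the bilinear refinement of the Strichartz estimate, \eqref{Xsb4} (after Bourgain and Ozawa--Tsutsumi), interpolated in Lemma~\ref{LEM:Xsb2} so that the dual factor may be taken in $X^{0,\frac12-2\eps}$: pairing $z_{\mathrm{low}}$ with $\jb{\nb}^\s z_{\mathrm{high}}$ and the other low factor with $v_4$ gains a factor of order $(N_{\mathrm{low}}/N)^{1/2}$ from each pair, which converts the loss into $N^{\s - \frac53 s+}$ after a further case split on whether the low frequencies exceed $N^{1/3}$ (Subcases (2.b.i)--(2.b.iii) and (4.b.i)--(4.b.iv)). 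You invoke bilinear refinements only for the $v$-factors; without deploying them on the $z$--$z$ and $z$--$v_4$ pairs in this high-low regime, the $|z|^2z$ and $vzz$ estimates do not close. Relatedly, the threshold $s>\frac35$ does not come from the admissibility constraint as you suggest, but from the balanced case in which all three random factors live at frequencies $\ges N^{1/3}$, each contributing only $N^{-s/3}$ of decay against the loss $N^{\s}$, whence the condition $\frac53 s > \s$ with $\s>1$.
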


\noi
The details of the proof of Theorem~\ref{THM:1} are presented in Section 3. We discuss here a very brief outline of the argument. Denoting the linear and nonlinear parts of $u$ by
$z (t) = z^\o(t) : = S(t) \phi^\o$
and $v(t) := u(t) - S(t) \phi^\o$ respectively,
we can reduce \eqref{NLS4d} to
\begin{equation}
\begin{cases}
	 i \dt v + \Dl v = \pm |v + z|^2(v+z)\\
v|_{t = 0} = 0.
 \end{cases}
\label{NLS4d2}
\end{equation}

\noi
We then prove that
the Cauchy problem \eqref{NLS4d2}	
is almost surely locally well-posed for $v$,
viewing $z$ as a random forcing term.
This is done by using the standard subcritical $X^{s, b}$-spaces with $b > \frac 12$
defined by
\[ \|u\|_{X^{s, b}(\R\times \R^4)} = \| \jb{\xi}^s \jb{\tau + |\xi|^2}^b \ft{u}(\tau, \xi)\|_{L^2_{\tau, \xi}(\R\times\R^4)}.\]

\noi
We point out that  the uniqueness in Theorem \ref{THM:1}
refers to  uniqueness of the nonlinear part $v(t) = u(t) - S(t) \phi^\o$ of a solution $u$.

We conclude this introduction with several remarks.

\begin{remark}\rm

Theorem \ref{THM:1} holds for both defocusing and focusing cases
(corresponding to the $+$ sign and the $-$ sign in \eqref{NLS1}, respectively)
due to the local-in-time nature of the problem.
\end{remark}

\begin{remark}\rm
Theorem \ref{THM:1} can also be proven
with the variants of the $X^{s, b}$-spaces
adapted to the $U^p$- and $V^p$-spaces
introduced by Koch, Tataru, and their collaborators \cite{KochT, HHK, HTT11}.
These spaces are designed to handle  problems in critical regularities.
We decided to present the proof with the
classical subcritical $X^{s, b}$-spaces, $b > \frac 12$,
to emphasize that the problem has become subcritical
upon randomization.
We should, however, point out that,
with the spaces introduced by Koch and Tataru,
we can also prove probabilistic  small data global
well-posedness and scattering
as a consequence of the probabilistic  global-in-time Strichartz estimates (Proposition \ref{PROP:R5}).
See our paper \cite{BOP2}
for an example of such results for the cubic NLS on $\R^d$, $d \geq 3$.

It is of interest to consider almost sure global existence for \eqref{NLS4d}.
While the mass of $v$ in \eqref{NLS4d2} has a global-in-time control,
there is no
 energy conservation for $v$
 and thus  we do not know how to proceed at this point.
In  \cite{BOP2}, we establish
almost sure global existence for \eqref{NLS4d},
assuming an a priori control on the $H^1$-norm of the nonlinear part $v$ of a solution.
We
also prove there, without any assumption,
global existence with a large probability
by considering a randomization, not on unit cubes
but on dilated cubes this time.

In the context of the defocusing cubic NLW on $\R^4$,
one can obtain an a priori bound on the energy of the nonlinear part of a solution, see \cite{BT3}. As a consequence, the third author \cite{POC} proved almost sure global well-posedness
of the energy-critical defocusing cubic NLW on $\R^4$
below the scaling critical regularity.

\end{remark}

\begin{remark}\rm
In Theorem \ref{THM:1},
we simply used $\s = 1+$ as the regularity of
the nonlinear part $v$.
It is possible to characterize the possible values of $\s$ in terms of
the regularity $s < 1$ of $\phi$.
However, for simplicity of  presentation, we omitted such a  discussion.
\end{remark}

\begin{remark}\rm
In probabilistic well-posedness results \cite{Bo2, Bo97, CO, NS} for NLS on $\T^d$,
random initial data are assumed to be of the following specific form:
\begin{equation}
\label{I3}
 u_0^\omega(x) = \sum_{n \in \Z^d} \frac{g_n(\o)}{(1+|n|^2)^\frac{\al}{2}}e^{i n \cdot x},
 \end{equation}

\noi
where $\{g_n\}_{n \in \Z^d}$ is a sequence of independent
complex-valued standard Gaussian random variables.
The expression \eqref{I3}
has a close connection to the study of invariant (Gibbs) measures
and, hence, it is of importance.
At the same time, due to the lack of a full range of Strichartz estimates on $\T^d$, one could not handle a general randomization of a given function as in \eqref{I2}. In Theorem \ref{THM:1}, we consider NLS on $\R^4$ and thus we do not encounter this issue thanks to a full range of the Strichartz estimates.
For NLW, finite speed of propagation allows us to use a full range of Strichartz estimates even on compact domains, at least locally in time; thus, in that context, one does not encounter such an issue.

\end{remark}

\begin{remark}\label{REM:LM}\rm
In a recent preprint,
L\"uhrmann-Mendelson \cite{LM}
considered the defocusing NLW on $\R^3$
with randomized initial data defined in \eqref{R1}
in a supercritical regularity
and proved almost sure global well-posedness in the energy-subcritical case,
following the method developed in \cite{CO}.
For the energy-critical quintic NLW, they obtained
almost sure local well-posedness
along with small data global existence and scattering.

\end{remark}

\section{Probabilistic Strichartz estimates}

In this section, we state and prove some basic properties
of the randomized function $\phi^\o$ defined in \eqref{R1},
including the improved Strichartz estimates (Propositions \ref{PROP:R2}
and \ref{PROP:R5}).
First, recall the following probabilistic estimate. See \cite{BT2} for the proof.

\begin{lemma} \label{LEM:R1}
Assume \eqref{R2}. Then, there exists $C>0$ such that
\[ \bigg\| \sum_{n \in \Z^d} g_n(\omega) c_n\bigg\|_{L^p(\Omega)}
\leq C \sqrt{p} \| c_n\|_{\l^2_n(\Z^d)}\]

\noi
for all $p \geq 2$ and $\{c_n\} \in \l^2(\Z^d)$.
\end{lemma}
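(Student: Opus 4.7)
The plan is to establish a sub-Gaussian tail bound for $S_N = \sum_{|n|\leq N} g_n(\o) c_n$ via the standard Chernoff argument, and then convert it to the desired $L^p$ bound by integrating the tail estimate. The hypothesis \eqref{R2} is precisely the exponential moment control needed to close the Chernoff estimate with a $\sqrt p$ dependence.

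First I would reduce to a sum of independent real random variables with real coefficients. Writing $g_n = g_n^{(1)} + i g_n^{(2)}$, with $g_n^{(j)}$ distributed according to $\mu_n^{(j)}$, and $c_n = a_n + i b_n$, the sum $S_N$ splits into four real sums of the form $\sum_n \eta_n d_n$ with $\eta_n$ independent, mean zero, satisfying \eqref{R2}, and $(d_n) \in \ell^2$ with $\|d_n\|_{\ell^2} \leq \|c_n\|_{\l^2}$. By the triangle inequality in $L^p(\O)$ it is enough to bound one such real sum, which I relabel as $S_N = \sum_{|n|\leq N}\eta_n d_n$.

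Next, for any $\g\in\R$, independence gives
\[
\E\bigl[e^{\g S_N}\bigr] \;=\; \prod_{|n|\leq N}\E\bigl[e^{\g d_n \eta_n}\bigr] \;\leq\; \prod_{|n|\leq N} e^{c\g^2 d_n^2} \;\leq\; e^{c\g^2\|c_n\|_{\l^2}^2},
\]
using \eqref{R2} in the middle step. Markov's inequality applied to $e^{\g S_N}$, followed by optimization in $\g = \ld/(2c\|c_n\|_{\l^2}^2)$, yields
\[
P\bigl(|S_N|>\ld\bigr) \;\leq\; 2\exp\!\Bigl(-\tfrac{\ld^2}{4c\,\|c_n\|_{\l^2}^2}\Bigr)
\]
for every $\ld>0$ and every $N$ (with a constant that is uniform in $N$).

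Finally, for $p\geq 2$ I would integrate the tail:
\[
\|S_N\|_{L^p(\O)}^p \;=\; \int_0^\infty p\ld^{p-1}P(|S_N|>\ld)\,d\ld \;\leq\; 2p\int_0^\infty \ld^{p-1}e^{-\ld^2/(4c\|c_n\|_{\l^2}^2)}\,d\ld.
\]
A change of variables produces $(4c\|c_n\|_{\l^2}^2)^{p/2}\,\G(p/2+1)$, and Stirling's formula gives $\G(p/2+1)^{1/p}\lesssim \sqrt p$. Taking $p$-th roots and then the limit $N\to\infty$ (justified because the partial sums form a Cauchy sequence in $L^p(\O)$ by the same estimate applied to tails of $(c_n)$) yields the claimed bound. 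The only real obstacle is the bookkeeping in the complex-valued reduction and the Stirling step; no part of the argument is genuinely difficult.
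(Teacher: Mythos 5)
Your argument is correct. The paper does not prove this lemma itself but refers to \cite{BT2}, and the proof there is exactly your route --- exponential moment bound via independence and \eqref{R2}, Markov's inequality with the optimized $\gamma$, then the layer-cake formula and Stirling to extract the $\sqrt{p}$ --- so your proposal reproduces essentially the same argument.
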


Given $\phi \in H^s$, it is easy to see that its randomization $\phi^\o \in H^s$
almost surely, for example,
if $\{g_n\}$ has a uniform finite variance.
Under the assumption \eqref{R2}, we have
a more precise description on the size of $\phi^\o$.

\begin{lemma} \label{LEM:Hs}
Given  $\phi \in H^s(\R^d)$, let $\phi^\o$ be its randomization defined in \eqref{R1},  satisfying \eqref{R2}. Then, we have
\begin{align}
P\Big( \| \phi^\omega \|_{ H^s(  \R^d)} > \ld\Big)
\leq C e^{-c \ld^2  \|\phi\|_{ H^s}^{-2}}
\label{Hs1}
\end{align}

\noi
for all $\ld > 0$.
\end{lemma}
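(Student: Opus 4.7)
The plan is a standard Khinchine-type argument: first control the $L^p(\Omega)$ norm of $\|\phi^\omega\|_{H^s}$ by $\sqrt p\,\|\phi\|_{H^s}$, then optimize in $p$ via Chebyshev's inequality. The key technical inputs are Lemma \ref{LEM:R1} (the sub-Gaussian bound on linear combinations of the $g_n$) together with the finite-overlap property of the Wiener partition $\{\psi(\cdot-n)\}$.

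First I would write $\|\phi^\omega\|_{H^s(\R^d)}=\|\jb{D}^s\phi^\omega\|_{L^2_x}$ and, assuming $p\geq 2$, apply Minkowski's inequality to exchange the $L^p(\Omega)$ and $L^2(\R^d)$ norms:
\[
\bigl\|\|\phi^\omega\|_{H^s}\bigr\|_{L^p(\Omega)}
\;\leq\;\bigl\|\,\|\jb{D}^s\phi^\omega(x)\|_{L^p(\Omega)}\bigr\|_{L^2_x}.
\]
For each fixed $x$, $\jb{D}^s\phi^\omega(x)=\sum_n g_n(\omega)\,\jb{D}^s\psi(D-n)\phi(x)$, so Lemma \ref{LEM:R1} yields
\[
\|\jb{D}^s\phi^\omega(x)\|_{L^p(\Omega)}
\;\leq\;C\sqrt{p}\,\Bigl(\sum_{n\in\Z^d}|\jb{D}^s\psi(D-n)\phi(x)|^2\Bigr)^{1/2}.
\]
Squaring, integrating in $x$, and using Fubini gives
\[
\bigl\|\,\|\phi^\omega\|_{H^s}\bigr\|_{L^p(\Omega)}
\;\leq\;C\sqrt{p}\,\Bigl(\sum_{n\in\Z^d}\|\jb{D}^s\psi(D-n)\phi\|_{L^2_x}^2\Bigr)^{1/2}.
\]

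Next I would use that $\psi(\xi-n)$ is supported in $\{|\xi-n|\leq 1\}$, on which $\jb{\xi}\sim\jb{n}$, so $\|\jb{D}^s\psi(D-n)\phi\|_{L^2}\sim\jb{n}^s\|\psi(D-n)\phi\|_{L^2}$. Combined with the finite-overlap bound $\sum_n|\psi(\xi-n)|^2\lesssim 1$, this gives
\[
\sum_{n\in\Z^d}\jb{n}^{2s}\|\psi(D-n)\phi\|_{L^2}^2
\;\lesssim\;\int_{\R^d}\jb{\xi}^{2s}|\widehat\phi(\xi)|^2\,d\xi
\;=\;\|\phi\|_{H^s}^2,
\]
and therefore $\bigl\|\,\|\phi^\omega\|_{H^s}\bigr\|_{L^p(\Omega)}\leq C\sqrt{p}\,\|\phi\|_{H^s}$ for every $p\geq 2$.

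Finally, Chebyshev's inequality gives $P(\|\phi^\omega\|_{H^s}>\lambda)\leq\lambda^{-p}(C\sqrt{p}\,\|\phi\|_{H^s})^p$ for all $p\geq 2$. Optimizing by choosing $p$ proportional to $\lambda^2/\|\phi\|_{H^s}^2$ (valid once $\lambda\gtrsim\|\phi\|_{H^s}$, and otherwise \eqref{Hs1} is trivial by adjusting constants) produces the Gaussian tail bound $Ce^{-c\lambda^2\|\phi\|_{H^s}^{-2}}$. I do not anticipate a real obstacle here: the only place requiring care is verifying the passage from $\jb{D}^s$ on each $\psi(D-n)\phi$ to the weight $\jb{n}^s$ together with the finite-overlap estimate, both of which are immediate from the support properties of $\psi$.
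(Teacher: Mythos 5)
Your proposal is correct and follows essentially the same route as the paper: Minkowski's integral inequality to pass to $L^p(\Omega)$ pointwise in $x$, Lemma \ref{LEM:R1} for the $\sqrt{p}$ moment bound, the finite-overlap/almost-orthogonality of the Wiener partition to recover $\|\phi\|_{H^s}$, and then Chebyshev with $p \sim \lambda^2\|\phi\|_{H^s}^{-2}$ (handling small $\lambda$ by adjusting $C$). The only cosmetic difference is that the paper keeps $\jb{\nabla}^s$ inside the $\ell^2_n$ sum rather than converting to the weight $\jb{n}^s$, but both steps are justified by the same support considerations.
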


\begin{proof}
By Minkowski's integral inequality and Lemma \ref{LEM:R1}, we have
\begin{align*}
\Big(\mathbb{E} \|  \phi^\o   \|_{ H^s(\R^d)}^p\Big)^\frac{1}{p}
&  \leq \big\| \|\jb{\nb}^s \phi^\o \|_{L^p(\Omega)} \big\|_{L^2_{ x}( \R^d)}
\les \sqrt p \big\|
\|\psi(D-n) \jb{\nb}^s \phi  \|_{\l^2_n}\big\|_{L^2_{x}} \notag \\
&
\sim  \sqrt p \| \phi \|_{ H^s}
\end{align*}

\noi
for any $p \geq 2$. Thus, we have obtained
\[ \mathbb{E}[ \|\phi^\o\|_{H^s}^p] \leq C_0^p p^\frac{p}{2} \|\phi\|_{H^s}^p.
\]

\noi
By Chebyshev's inequality, we have
\begin{align}
P\Big( \|\phi^\o\|_{H^s} > \ld \Big) 
<  \bigg(\frac{C_0 p^\frac{1}{2} \|\phi\|_{H^s}}{\ld}\bigg)^p
\label{Cheby}
\end{align}

\noi
for $p \geq 2$.

Let $p =\Big( \frac{\ld}{C_0e\|\phi\|_{H^s}}\Big)^2$.
If $p \geq 2$, then by \eqref{Cheby}, we have
\begin{align*}
P\Big( \|\phi^\o\|_{H^s} > \ld \Big) < \bigg(\frac{C_0 p^\frac{1}{2} \|\phi\|_{H^s}}{\ld}\bigg)^p
= e^{-p} = e^{-c \ld^2\|\phi\|_{H^s}^{-2}}.
\end{align*}

\noi
Otherwise, i.e.~if  $p =\Big( \frac{\ld}{C_0e\|\phi\|_{H^s}}\Big)^2 \leq 2$,
we can choose $C$ such that $C e^{-2} \geq 1$.
Then, we have
\begin{align*}
P\Big( \|\phi^\o\|_{H^s} > \ld \Big) \leq 1
\leq C e^{-2} \leq C e^{-c \ld^2\|\phi\|_{H^s}^{-2}},
\end{align*}
thus giving the desired result.
\end{proof}

The next lemma shows that, if $\phi \in L^2(\R^d)$,
then its randomization $\phi^\o$ is almost surely in $L^p(\R^d)$ for any $p\in [2, \infty)$.

\begin{lemma} \label{LEM:Lp}
Given  $\phi \in L^2(\R^d)$, let $\phi^\o$ be its randomization defined in \eqref{R1},
 satisfying \eqref{R2}.
Then, given finite $p \geq 2$, there exist $C, c >0$ such that
\begin{align}
P\Big( \| \phi^\omega \|_{ L^p(  \R^d)} > \ld\Big)
\leq C e^{-c \ld^2  \|\phi\|_{ L^2}^{-2}}
\label{Lp}
\end{align}

\noi
for all $\ld > 0$.
In particular, $\phi^\o$ is in $L^p(\R^d)$ almost surely.
\end{lemma}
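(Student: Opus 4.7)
The plan is to mimic the proof of Lemma \ref{LEM:Hs}, replacing the role of Plancherel with a unit-scale Bernstein estimate so as to bypass the $L^2$-based nature of $H^s$. As before, I would first bound all sufficiently high moments $\mathbb{E}\bigl[\|\phi^\o\|_{L^p(\R^d)}^q\bigr]$ uniformly in $q$, and then optimize via Chebyshev.

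\smallskip

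First, I would take $q\geq \max(2,p)$ and apply Minkowski's integral inequality in the form $L^q(\O)L^p_x\subseteq L^p_x L^q(\O)$ to move $L^q(\O)$ inside the spatial norm, which is valid precisely because $q\geq p$. On the inside, $\phi^\o(x)=\sum_{n\in\Z^d} g_n(\o)\psi(D-n)\phi(x)$ is, for each fixed $x$, a Gaussian-type series with coefficients $c_n(x)=\psi(D-n)\phi(x)$, so Lemma \ref{LEM:R1} applies pointwise in $x$, yielding
\begin{align*}
\bigl(\mathbb{E}\|\phi^\o\|_{L^p(\R^d)}^q\bigr)^{1/q}
&\leq \Bigl\|\,\|\phi^\o(x)\|_{L^q(\O)}\Bigr\|_{L^p_x}
\les \sqrt{q}\,\Bigl\|\bigl(\textstyle\sum_{n\in\Z^d}|\psi(D-n)\phi|^2\bigr)^{1/2}\Bigr\|_{L^p_x(\R^d)}.
\end{align*}

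\smallskip

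Next, since $p\geq 2$, a second application of Minkowski's inequality (this time swapping $\ell^2_n$ with $L^p_x$ in the opposite direction) gives
\[
\Bigl\|\bigl(\textstyle\sum_{n}|\psi(D-n)\phi|^2\bigr)^{1/2}\Bigr\|_{L^p_x}
\leq \bigl\|\,\|\psi(D-n)\phi\|_{L^p_x}\bigr\|_{\l^2_n}.
\]
Now I invoke the unit-scale Bernstein inequality: since $\psi(D-n)\phi$ has Fourier support in the unit cube $n+[-1,1]^d$, translating by $-n$ in frequency (a pure modulation in $x$, hence an $L^p$-isometry) reduces to a fixed compactly supported Fourier multiplier, and so $\|\psi(D-n)\phi\|_{L^p_x}\les \|\psi(D-n)\phi\|_{L^2_x}$ with a constant depending only on $\psi$, $p$, and $d$. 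Summing in $\l^2_n$ and using the finite-overlap almost-orthogonality of $\{\psi(\cdot-n)\}$ in frequency (Plancherel) yields $\bigl\|\|\psi(D-n)\phi\|_{L^2_x}\bigr\|_{\l^2_n}\les \|\phi\|_{L^2}$. Combining everything, I obtain $(\mathbb{E}\|\phi^\o\|_{L^p}^q)^{1/q}\leq C_0\sqrt{q}\|\phi\|_{L^2}$ for all $q\geq \max(2,p)$.

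\smallskip

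The final step is to close the argument by Chebyshev's inequality exactly as in the proof of Lemma \ref{LEM:Hs}: optimizing the bound $P(\|\phi^\o\|_{L^p}>\ld)\leq (C_0\sqrt{q}\|\phi\|_{L^2}/\ld)^q$ with the choice $q=\bigl(\ld/(C_0 e\|\phi\|_{L^2})\bigr)^2$ (provided this is $\geq \max(2,p)$, with the complementary small-$\ld$ case absorbed into the constant $C$) gives \eqref{Lp}. The almost sure finiteness of $\|\phi^\o\|_{L^p}$ follows immediately since the right-hand side of \eqref{Lp} tends to $0$ as $\ld\to\infty$. I expect the main technical point to be the double application of Minkowski's inequality in opposite directions (requiring $q\geq p\geq 2$), together with the unit-scale Bernstein step — both are routine but are what truly use the improved integrability provided by the Wiener decomposition rather than just dyadic Littlewood--Paley theory.
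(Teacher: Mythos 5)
Your proposal is correct and follows essentially the same route as the paper: Minkowski's inequality to exchange $L^q(\O)$ and $L^p_x$ (using $q\geq p$), Lemma \ref{LEM:R1} pointwise in $x$, a second Minkowski exchange of $\l^2_n$ and $L^p_x$ (using $p\geq 2$), the unit-scale Bernstein estimate \eqref{R4} to pass from $L^p_x$ to $L^2_x$, almost orthogonality to recover $\|\phi\|_{L^2}$, and then the Chebyshev optimization from Lemma \ref{LEM:Hs}. No gaps.
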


\begin{proof}
By Lemma \ref{LEM:R1}, we have
\begin{align*}
\Big(\mathbb{E} \|  \phi^\o   \|_{ L^p_x (\R^d)}^r\Big)^\frac{1}{r}
&  \leq \big\| \| \phi^\o \|_{L^r(\Omega)} \big\|_{L^p_{ x}( \R^d)}
\les \sqrt r \big\|
\|\psi(D-n)  \phi  \|_{\l^2_n}\big\|_{L^p_{x}} \notag \\
&
\leq \sqrt r \big\|
\|\psi(D-n)  \phi  \|_{L^p_x}\big\|_{\l^2_n}
\leq \sqrt r \big\|
\|\psi(D-n)  \phi  \|_{L^2_x}\big\|_{\l^2_n} \notag \\
& \sim  \sqrt r \| \phi \|_{L^2_x}
\end{align*}

\noi
for any $r \geq p$.
Then, \eqref{Lp} follows as in the proof of Lemma~\ref{LEM:Hs}.
\end{proof}

We conclude this section by presenting  the proofs of the improved Strichartz estimates under randomization.
Before continuing further, we briefly recall the definitions of the smooth projections from Littlewood-Paley theory.
Let $\varphi$ be a smooth real-valued bump function supported on $\{\xi\in \R^d: |\xi|\leq 2\}$ and $\varphi\equiv 1$ on $\{\xi: |\xi|\leq 1\}$. If $N>1$ is a dyadic number, we define the smooth projection $\P_{\leq N}$ onto frequencies
$\{|\xi| \leq N\}$ by
\[\widehat{\P_{\leq N}f}(\xi):=\varphi\big(\tfrac{\xi}N\big)\widehat f(\xi).\]

%
\noi
Similarly, we can define the smooth projection $\P_{N}$ onto frequencies $\{|\xi|\sim  N\}$ by
\[\widehat{\P_{N}f}(\xi):=\Big(\varphi\big(\tfrac{\xi}N\big)-\varphi\big(\tfrac{2\xi}N\big)\Big)\widehat f(\xi).\]
We make the convention that $\P_{\leq 1}=\P_1$. Bernstein's inequality states that
\begin{equation}
\|\P_{\leq N} f\|_{L^q(\R^d)} \les N^{\frac dp-\frac dq}\|\P_{\leq N} f\|_{L^p(\R^d)}, \qquad
1\leq p \leq q \leq \infty.
\label{R3}
\end{equation}

\noi The same inequality holds if we replace $\P_{\leq N}$ by $\P_N$. As an immediate corollary, we have
\begin{equation}
\|\psi(D -n) \phi\|_{L^q(\R^d)} \les \|\psi(D-n)  \phi \|_{L^p(\R^d)}, \qquad
1\leq p \leq q \leq \infty,
\label{R4}
\end{equation}

\noi
for all $n \in \Z^d$.
This follows from applying \eqref{R3}
to $\phi_n(x) := e^{2\pi i n\cdot x} \psi(D-n) \phi(x)$
and noting that $\supp \ft \phi_n \subset [-1, 1]^d$ .
The point of \eqref{R4} is that once a function is (roughly) restricted to a cube, we do not need to lose any regularity to go from the $L^q$-norm  to the $L^p$-norm, $q \geq p$.

\begin{proof}[Proof of Proposition \ref{PROP:R2}]

Let $q, r \geq 2$. We write $L_T^q$ to denote $L^q_t([0, T])$.
By Lemma \ref{LEM:R1} and \eqref{R4}, we have
\begin{align*}
\Big(\mathbb{E} \|S(t)  \phi^\omega & \|_{L^q_t L^r_x( [0, T] \times \R^d)}^p\Big)^\frac{1}{p}
  \leq \Big\| \| S(t) \phi^\omega\|_{L^p(\Omega)} \Big\|_{L^q_T L^r_x}
\leq \sqrt{p}\Big\| \| \psi(D-n) S(t) \phi\|_{\l^2_n } \Big\|_{L^q_T L^r_x}\\
& \leq \sqrt{p}\Big\| \| \psi(D-n) S(t) \phi\|_{L^r_x } \Big\|_{ L^q_T \l^2_n }
 \les \sqrt{p}\Big\| \| \psi(D-n) S(t) \phi\|_{L^2_x } \Big\|_{ L^q_T \l^2_n}\\
&  \les T^\frac{1}{q}
 \sqrt{p}\|\phi\|_{L^2_x}
\end{align*}

\noi
for $p \geq \max (q, r)$.
Then, \eqref{R2aa} follows as in the proof of Lemma~\ref{LEM:Hs}.
\end{proof}

\begin{proof}[Proof of Proposition \ref{PROP:R5}]
Let $(q, r)$ be Schr\"odinger admissible and $\wt{r} \geq r$.
Then, proceeding as before, 
 we have
\begin{align*}
\Big(\mathbb{E} \|S(t) \phi^\omega \|_{L^q_t L^{\wt r}_x( \R \times \R^d)}^p\Big)^\frac{1}{p}
& \les \sqrt p \Big\|
\|\psi(D-n)S(t) \phi\|_{L^{\wt r}_x}\Big\|_{\l^2_n L^q_{t}}
 \les \sqrt p \Big\|
\|\psi(D-n)S(t) \phi\|_{ L^q_{t} L^{r}_x}\Big\|_{\l^2_n} \notag \\
\intertext{By Lemma \ref{LEM:Str}, }
&
\les
 \sqrt p \big\|
\|\psi(D-n)  \phi  \|_{L^2_x}\big\|_{\l^2_n}
\sim
 \sqrt p
\| \phi  \|_{L^2_x}
\end{align*}
	
\noi
for $p \geq \max(q, \wt{r})$.
Finally, \eqref{R5a} follows as above.
\end{proof}

\begin{remark}\rm
The Cauchy problem  \eqref{NLS1}
has also been studied
for initial data in the modulation spaces $M_s^{p, 1}$ for $ 1 \leq p \leq \infty$ and $s\geq 0$; see, for example, \cite{BO} and \cite{wahahu}.
Thus, it is tempting to consider what happens if we randomize an initial condition  in a modulation space $M^{p, q}_s$. In this case, however, there is no improvement in the Strichartz estimates in terms of integrability, i.e.~$p$, hence, no improvement of well-posedness with respect to $M^{p, q}_s$ in terms of differentiability, i.e.~in $s$.
Indeed, 
by computing the moments of the modulation norm of the randomized function \eqref{R1}, one immediately sees that the modulation norm  remains essentially unchanged
due to the outside summation over $n$. In the proof of Propositions \ref{PROP:R2} and \ref{PROP:R5}, the averaging effect of a linear combination of the random variables $g_n$ played a crucial role.
For the modulation spaces, we do not have such an averaging effect
since the outside summation over $n$ forces us to work on a piece restricted to each cube, i.e. each random variable at a time.
\end{remark}

\section{Almost sure local well-posedness}

Given $\phi \in H^s(\R^d)$,
let $\phi^\o$ be its randomization defined in \eqref{R1}.
In the following, we consider
the Cauchy problem \eqref{NLS4d}
with random initial data $u|_{t = 0} = \phi^\o$.
By letting
$z (t) = z^\o(t) : = S(t) \phi^\o$
and $v(t) := u(t) - S(t) \phi^\o$,
we can reduce \eqref{NLS1} to
\begin{equation}
\begin{cases}
	 i \dt v + \Dl v = \pm |v + z|^2(v+z)\\
v|_{t = 0} = 0.
 \end{cases}
\label{NLS2}
\end{equation}

\noi
By expressing \eqref{NLS2} in the Duhamel formulation,
we have
\begin{equation}
v(t) =  \mp i\int_0^t S(t-t') \N (v+ z)(t') dt',
\label{NLS3}
\end{equation}

\noi
where $\N(u) = |u|^2 u=u\bar u u$.
Let $\eta$ be a smooth cutoff function supported on $[-2, 2]$,
$\eta \equiv 1$ on $[-1, 1]$, and let $\eta_{_T}(t) = \eta\big(\frac{t}{T}\big)$.
Note that,
if $v$ satisfies
\begin{equation}
v(t) =  \mp i\eta(t)\int_0^t S(t-t') \eta_{_T}(t')\N (\eta v+ \eta_{_T}z)(t') dt'
\label{NLS4}
\end{equation}

\noi
\noi
for some $T \ll 1$,
then
it also satisfies \eqref{NLS3} on $[-T, T]$.
Hence, 
we consider \eqref{NLS4} in the following.

Given $z(t) = S(t) \phi^\o$, define $\G$ by
\begin{equation}
\G v(t) =  \mp i \eta(t)\int_0^t S(t-t')\eta_{_T}(t') \N (\eta v+ \eta_{_{T}}z)(t') dt'.
\label{NLS5}
\end{equation}

\noi
Then, the following nonlinear estimates yield Theorem \ref{THM:1}.

\begin{proposition}\label{PROP:LWP}
Let $s\in\big(\frac 35, 1\big)$. Given $\phi \in H^s(\R^4)$,
let $\phi^\o$ be its randomization defined in \eqref{R1}, satisfying \eqref{R2}.
Then,
there exists $\s = 1+$, $ b = \frac{1}{2}+$ and $\theta = 0+$ such that
for each small $T\ll 1$ and $R>0$, we have
\begin{align}
\|\G v\|_{X^{\s, b}}
& \leq C_1T^\theta
(\|v\|_{X^{\s, b}}^3 + R^3),  \label{nl0a}\\
\|\G v_1 - \G v_2  \|_{X^{\s, b}}
& \leq C_2 T^\theta
(\|v_1\|_{X^{\s, b}}^2+\|v_2\|_{X^{\s, b}}^2 + R^2)
\|v_1 -v_2 \|_{X^{\s, b}},
\label{nl0b}
\end{align}

\noi
outside a set of probability at most $ C \exp\Big(-c \frac{R^2}{\|\phi\|_{H^s}^{2}}\Big)$.

\end{proposition}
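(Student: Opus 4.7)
The plan is to set up a standard Picard iteration for $v$ in the space $X^{\sigma, b}(\R \times \R^4)$ with $\sigma$ slightly above $1$ and $b$ slightly above $\tfrac{1}{2}$, treating the stochastic linear evolution $z = S(t)\phi^\omega$ as a deterministic forcing term whose size is controlled by the event on which Proposition \ref{PROP:R2} provides favorable Strichartz bounds at threshold $R$. Invoking the classical linear $X^{\sigma, b}$ estimates for the Duhamel operator and the time-localization lemma, for $b \in (\tfrac{1}{2}, 1)$ and some small $\theta > 0$,
\begin{align*}
\|\G v\|_{X^{\sigma, b}} \;\lesssim\; T^\theta\, \bigl\|\N(\eta v + \eta_{_T} z)\bigr\|_{X^{\sigma, b - 1 + \theta}},
\end{align*}
so that \eqref{nl0a} and \eqref{nl0b} reduce to trilinear estimates for $\N$.

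Next, I would expand $\N(v+z) = (v+z)\overline{(v+z)}(v+z)$ into eight trilinear terms and group them by the number of $z$-factors: one pure $v\bar v v$ term, three of type $vvz$, three of type $vzz$, and one $z\bar z z$ term. For each, the target is an estimate of the form
\begin{align*}
\|u_1 \bar u_2 u_3\|_{X^{\sigma, b-1+\theta}} \;\lesssim\; T^\theta \prod_{i=1}^{3} \|u_i\|_{Y_i},
\end{align*}
where $Y_i = X^{\sigma, b}$ when $u_i = v$, and $Y_i$ is a probabilistic Strichartz-type space when $u_i = z$. For the purely deterministic $v\bar v v$ piece, since \eqref{NLS4d} is energy-critical at $\sigma = 1$, the choice $\sigma > 1$ and $b > \tfrac{1}{2}$ lands us in the subcritical regime on $\R^4$; the trilinear bound then follows from the Schr\"odinger admissibility of $(q,r) = (3,3)$ in four dimensions, the embedding $X^{0, 1/2+} \hookrightarrow L^3_t L^3_x$, and a fractional Leibniz rule.

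For the terms containing at least one factor of $z$, the key input is that $\jb{\nb}^s z = S(t)\, \jb{\nb}^s \phi^\omega$ and that $\jb{\nb}^s$ commutes with the Wiener randomization, so Proposition \ref{PROP:R2} applied to $\jb{\nb}^s \phi \in L^2(\R^4)$ yields
\begin{align*}
\bigl\|\jb{\nb}^s z\bigr\|_{L^q_t L^r_x([0,T]\times \R^4)} \;\lesssim\; T^{1/q} R
\end{align*}
for any $q, r \in [2, \infty)$, outside a set of probability at most $C\exp(-cR^2/\|\phi\|_{H^s}^2)$. After a Littlewood--Paley decomposition of each trilinear piece, one distributes $\jb{\nb}^\sigma$ onto the top-frequency factor and closes by H\"older's inequality, placing the two low-frequency factors in Strichartz spaces (via $X^{\sigma, b}$ for $v$-factors and via probabilistic Strichartz with $r$ as large as needed for $z$-factors). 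The delicate case, and the main obstacle, arises when $z$ itself carries the top frequency $N_{\max}$: only $\jb{\nb}^s z$ is at our disposal, so we must pay $N_{\max}^{\sigma - s}$ and recover this loss from Bernstein gains on the two low-frequency factors, whose integrability can be improved essentially for free through probabilistic Strichartz. The lower bound $s > \tfrac{3}{5}$ is what makes this balance close after summation over all dyadic scales.

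The Lipschitz-type estimate \eqref{nl0b} follows from the same trilinear framework applied to the algebraic identity
\begin{align*}
\N(v_1 + z) - \N(v_2 + z) = (v_1 - v_2)\, O\bigl((v_1 + z)^2 + (v_2 + z)^2\bigr),
\end{align*}
which produces trilinear expressions of the same structure with one factor replaced by $v_1 - v_2$. Finally, the exceptional set outside of which the entire argument succeeds is the union of the events on which $\|\jb{\nb}^s z\|_{L^q_t L^r_x}$ exceeds $R$ for the finitely many Strichartz pairs $(q,r)$ used above; by Proposition \ref{PROP:R2}, this union has total probability at most $C \exp(-cR^2/\|\phi\|_{H^s}^2)$, which is exactly the claimed bound.
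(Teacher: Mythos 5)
Your overall architecture matches the paper's: reduce via the linear $X^{s,b}$/Duhamel estimates to trilinear estimates for $\N(\eta v + \eta_{_T}z)$, split by the number of $z$-factors, decompose dyadically, place $v$-factors in $X^{\s,b}$ via deterministic Strichartz embeddings and $z$-factors in $L^q_tL^r_x$ via Proposition \ref{PROP:R2}. But there is a genuine gap exactly at the point you flag as ``the delicate case,'' namely when a $z$-factor carries the maximal frequency $N_{\max}$ and the remaining input factors sit at much lower frequencies (e.g.\ the configuration $N_3 \sim N_4 \gg N_1, N_2$ in the $zzz$ and $vzz$ cases). Your proposed mechanism --- pay $N_{\max}^{\s-s}$ and ``recover this loss from Bernstein gains on the two low-frequency factors, whose integrability can be improved essentially for free through probabilistic Strichartz'' --- does not work: Bernstein on a factor localized at a \emph{low} frequency $N_j$ only produces (harmless or harmful) powers of $N_j$, and raising the integrability of the $z$-factors costs nothing probabilistically but likewise yields no negative power of $N_{\max}$. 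Since the dual factor $v_4$ is only controlled in $X^{0,1-b-\theta}$ with no derivative to spare, a pure H\"older/Strichartz argument leaves an unsummable factor $N_{\max}^{\s-s} = N_{\max}^{1-s+} > 1$ in these cases, for every $s<1$.

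The missing ingredient is the bilinear refinement of the Strichartz estimate (Bourgain, Ozawa--Tsutsumi), estimate \eqref{Xsb4},
\begin{equation*}
\| \P_{N_1} u_1\, \P_{N_2} u_2\|_{L^2_{t,x}} \les N_1 \Big(\tfrac{N_1}{N_2}\Big)^{\frac12}\|\P_{N_1}u_1\|_{X^{0,b}}\|\P_{N_2}u_2\|_{X^{0,b}},
\end{equation*}
together with its interpolated version \eqref{Xsb6}. Pairing each low-frequency factor with one of the two comparable high-frequency factors ($\jb{\nb}^\s z_3$ and $v_4$) in $L^2_{t,x}$ produces the gain $N_{\max}^{-\frac12}$ \emph{twice}, and it is precisely the resulting balance $N_1^{\frac32-s}N_2^{\frac32-s+}N_{\max}^{\s-s-1+}$, under $N_1,N_2 \ll N_{\max}^{1/3}$, that yields the condition $\frac53 s > \s + \frac43\eps_1$, i.e.\ the threshold $s>\frac35$. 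Without this bilinear input your scheme cannot close in these cases, and the specific exponent $\frac35$ in the hypothesis has no source in your argument. (A secondary omission: the exceptional set must also include the event controlled by Lemma \ref{LEM:Hs}, since the bilinear estimates are fed $\|\P_{N_j}\phi^\o\|_{H^s}$ rather than a space-time norm of $z$; and the dyadic summations force strict inequalities such as $s>\frac{\s}{2}$, which is why $\jb{\nb}^{s}$ must be applied with a small margin.)
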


We first present the proof of Theorem \ref{THM:1}, assuming Proposition \ref{PROP:LWP}.
Then, we prove Proposition \ref{PROP:LWP}
at the end of this section.

\begin{proof}[Proof of Theorem \ref{THM:1}]
Let $B_1$ denote the ball of radius 1
centered at the origin in $X^{\s, b}$.
Then,
given $T \ll 1$,  we show that the map $\G$ is a contraction on $B_1$.
Given $T \ll 1$,
we choose $R = R(T) \sim T^{-\frac{\g}{2}}$
for some $\g \in (0, \frac{2\theta}{3})$
such that
\[ C_1 T^\theta(1+ R^3) \leq 1\quad \text{and} \quad C_2 T^\theta(2+R^2) \leq \frac 12. \]

\noi
Then, for $v, v_1, v_2 \in B_1$, Proposition \ref{PROP:LWP} yields
\begin{align*}
\|\G v\|_{X^{\s, b}}
& \leq 1, \\
\|\G v_1 - \G v_2  \|_{X^{\s, b}}
& \leq  \frac {1}{2}
\|v_1 -v_2 \|_{X^{\s, b}}
\end{align*}

\noi
outside an exceptional set  of probability at most
\[ C \exp\bigg(-c \frac{R^2}{\|\phi\|_{H^s}^{2}}\bigg)
= C \exp \bigg(-\frac{c}{T^{\g}\|\phi\|_{H^s}^{2}}\bigg).\]

\noi
Therefore, by defining $\O_T$ to be the complement of this exceptional set,
it follows that, for $\o\in \O_T$, there exists a unique $v^\o \in B_1$
such that $\G v^\o = v^\o$.
This completes the proof of Theorem \ref{THM:1}.
\end{proof}

Hence, it remains to prove Proposition \ref{PROP:LWP}.
Before proceeding further, we first present some lemmata
on the basic $X^{s, b}$-estimates.
See  \cite{Bo2, KPV93, TAO}
for the basic properties of the $X^{s, b}$-spaces.

\begin{lemma}\label{LEM:Xsb1}
\textup{(i)} Linear estimates:
Let $T \in( 0, 1)$ and $b \in \big(\frac{1}{2}, \frac{3}{2}\big]$.
Then, for $s \in \R$ and $\theta \in \big[0, \frac{3}{2} - b\big)$, we have
\begin{align}
\|  \eta_{_T} (t) S(t) \phi \|_{X^{s, b}(\R\times \R^4)}
& \les T^{\frac{1}{2}-b}\|\phi\|_{H^s(\R^4) }, \label{Xsb1}\\
\bigg\| \eta(t)
\int_0^t S(t - t') \eta_{_T}(t') F(t') dt' \bigg\|_{X^{s, b}(\R\times \R^4)}
& \les T^{\theta} \|F\|_{X^{s, b -1+\theta}(\R\times \R^4)}.\notag
\end{align}

\noi
\textup{(ii)} Strichartz estimates:
Let $(q, r)$ be Schr\"odinger admissible
and $p \geq 3$. Then, for $ b > \frac{1}{2}$
and $N_1 \leq N_2$, we have
\begin{align}
\|  u \|_{L^q_t L^r_x(\R\times \R^4)} & \les \|u\|_{X^{0, b}(\R\times \R^4)}, \label{Xsb2}\\
\|  u \|_{L^p_{t, x}(\R\times \R^4)}
& \les  \big\||\nb|^{2 - \frac 6p} u\big\|_{X^{0, b}(\R\times \R^4)},
\label{Xsb3}\\
\| \P_{N_1} u_1 \P_{N_2}u_2\|_{L^2_{t, x}(\R\times \R^4)}
& \les N_1 \bigg(\frac{N_1}{N_2}\bigg)^{\frac 12}
\|\P_{N_1} u_1\|_{X^{0, b}(\R\times\R^4)}\|\P_{N_2} u_2\|_{X^{0, b}(\R\times\R^4)}.
\label{Xsb4}
\end{align}

\end{lemma}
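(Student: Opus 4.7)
The estimates collected in Lemma \ref{LEM:Xsb1} are all classical; I outline how each piece is proved.

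For part (i), I would pass to the Fourier side. For the homogeneous estimate \eqref{Xsb1}, note that the spacetime Fourier transform satisfies $\widehat{\eta_{_T}(t) S(t)\phi}(\tau,\xi) = \widehat{\eta_{_T}}(\tau + |\xi|^2)\, \widehat{\phi}(\xi)$, so the $X^{s,b}$-norm factorizes as $\|\jb{\xi}^s\widehat\phi\|_{L^2_\xi}\cdot\|\jb{\tau}^b \widehat{\eta_{_T}}(\tau)\|_{L^2_\tau}$; the rescaling $\widehat{\eta_{_T}}(\tau) = T\widehat\eta(T\tau)$ produces the claimed factor $T^{1/2-b}$ in the admissible range of $b$, using rapid decay of $\widehat\eta$. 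For the inhomogeneous estimate, I would decompose $F$ dyadically in the modulation variable $\tau + |\xi|^2$ and treat the low-modulation piece by Taylor-expanding $(e^{it\lambda}-1)/(i\lambda)$ in the Duhamel kernel, while handling the high-modulation piece directly by size comparison of modulations; this is the standard Ginibre--Velo--Bourgain argument that yields the $T^\theta$ gain for $\theta \in [0, \tfrac{3}{2} - b)$.

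For part (ii), the embedding \eqref{Xsb2} follows from the transference principle: writing $u(t,x) = \int e^{it\tau}\, S(t) f_\tau(x)\, d\tau$ with $f_\tau(x) := \mathcal F^{-1}_\xi[\widehat u(\tau - |\xi|^2, \xi)]$, apply Minkowski's inequality in $L^q_t L^r_x$ followed by the linear Strichartz estimate of Lemma \ref{LEM:Str} to each frozen-$\tau$ free wave $S(t) f_\tau$, then Cauchy--Schwarz in $\tau$ using $b > \tfrac12$. Estimate \eqref{Xsb3} then follows from \eqref{Xsb2} applied with the $d=4$ admissible pair $(p, \tfrac{2p}{p-1})$ (which satisfies $\tfrac{2}{p} + \tfrac{4}{2p/(p-1)} = 2$), combined with the Sobolev embedding $\dot H^{2-6/p}_x(\R^4) \hookrightarrow L^p_x(\R^4)$, whose exponent matches because $\tfrac{4}{2p/(p-1)} - \tfrac{4}{p} = 2 - \tfrac{6}{p}$.

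Finally, the bilinear estimate \eqref{Xsb4} is Bourgain's bilinear Strichartz. Transference again reduces matters to the case $u_j = S(t) f_j$ of free Schr\"odinger solutions with $\widehat{f_j}$ supported in $\{|\xi| \sim N_j\}$. By Plancherel in spacetime, $\|P_{N_1} S(t) f_1 \cdot P_{N_2} S(t) f_2\|_{L^2_{t,x}}^2$ is expressed as an integral against $\delta(\xi_1 - \xi_1' + \xi_2 - \xi_2')\,\delta(|\xi_1|^2 - |\xi_1'|^2 + |\xi_2|^2 - |\xi_2'|^2)$, and the change of variables $(\xi_1, \xi_2) \mapsto (\xi_1 + \xi_2,\, |\xi_1|^2 + |\xi_2|^2)$ has Jacobian comparable to $|\xi_1 - \xi_2| \sim N_2$ on the relevant support, thanks to the transversality of the paraboloid pieces over the two annuli; in $d=4$ this yields the bound $N_1^{3/2} N_2^{-1/2} = N_1 (N_1/N_2)^{1/2}$. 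The main obstacle is \eqref{Xsb4}, which is the one estimate requiring genuine geometric/transversality input on the paraboloid rather than soft Fourier manipulations; the other four estimates reduce to standard bookkeeping once the Fourier-side viewpoint and transference principle are in place.
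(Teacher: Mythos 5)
Your proposal is correct and follows exactly the route the paper intends: the paper does not prove Lemma \ref{LEM:Xsb1} but merely cites \cite{Bo2, KPV93, TAO} for the linear $X^{s,b}$ estimates and notes that \eqref{Xsb2}, \eqref{Xsb3} come from Lemma \ref{LEM:Str} (plus Sobolev embedding) via transference, while \eqref{Xsb4} is the Bourgain/Ozawa--Tsutsumi bilinear refinement --- precisely the arguments you sketch, with the correct admissible pair $(p, \tfrac{2p}{p-1})$ and the correct $d=4$ exponent $N_1^{3/2}N_2^{-1/2}$. The only point worth a footnote is that the Jacobian heuristic $|\xi_1-\xi_2|\sim N_2$ degenerates when $N_1\sim N_2$, but there the claimed bound reduces to $N_1$ and follows from $L^4_{t,x}\times L^4_{t,x}$ H\"older together with \eqref{Xsb3}, so nothing is lost.
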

	
\noi
Recall that
\eqref{Xsb2} follows from the Strichartz estimate \eqref{Str1}
and
\eqref{Xsb3} follows from
Sobolev inequality and  \eqref{Str1},
while
\eqref{Xsb4} follows from a refinement of the Strichartz estimate
by Bourgain \cite{Bo98} and Ozawa-Tsutsumi \cite{OT}.

As a corollary to Lemma \ref{LEM:Xsb1}, we have the following estimates.
\begin{lemma}\label{LEM:Xsb2}
Given small $\eps > 0$, let $\eps_1 = 2\eps+$.
Then, for $N_1 \leq N_2$, we have
\begin{align}
\|  u \|_{L^\frac{3}{1+\eps_1}_{t, x}(\R\times \R^4)} & \les \|u\|_{X^{0, \frac12 - 2\eps }(\R\times \R^4)}, \label{Xsb5}\\
\| \P_{N_1}  u_1 \P_{N_2}u_2\|_{L^2_{t, x}(\R\times \R^4)} 
& \les
C(N_1, N_2)
 \|\P_{N_1}u_1\|_{X^{0, \frac{1}{2}+}(\R\times\R^4)}\|\P_{N_2} u_2\|_{X^{0, \frac{1}{2}-2\eps}(\R\times\R^4)},
\label{Xsb6}
\end{align}

\noi
where $C(N_1, N_2)$ is given by
\[
C(N_1, N_2) =
\begin{cases}
N_1^{\frac{3}{2} + \eps_1 +}
N_2^{-\frac 12 + \eps_1} & \text{if } N_1 \leq N_2, \\
N_1^{- \frac{1}{2} + 5\eps_1 +}
N_2^{\frac 32 -  3 \eps_1} & \text{if } N_1 \geq N_2.
\end{cases}
\]

\end{lemma}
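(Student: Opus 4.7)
My plan is to derive both estimates by interpolating the Strichartz bounds of Lemma~\ref{LEM:Xsb1} against the trivial identification $L^2_{t,x} = X^{0, 0}$ and the $1$D Sobolev embedding $X^{0, 1/2+} \hookrightarrow L^\infty_t L^2_x$ (which follows after conjugating by $e^{it\Dl}$ and using $H^{1/2+}_t \hookrightarrow L^\infty_t$).

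For \eqref{Xsb5}, I would first observe that $(q,r) = (3,3)$ is Schr\"odinger admissible in $d = 4$, since $\tfrac{2}{3} + \tfrac{4}{3} = 2 = \tfrac{d}{2}$, so \eqref{Xsb2} yields the embedding $X^{0, 1/2+} \hookrightarrow L^3_{t,x}$. Since $X^{0,b}$ is, after pulling out the Schr\"odinger group, a weighted $L^2$-space in the $b$-parameter, complex interpolation in $b$ combined with Riesz--Thorin interpolation for $L^p_{t,x}$ gives, for every $\theta \in (0,1)$, an embedding $X^{0, \theta(1/2+)} \hookrightarrow L^{p(\theta)}_{t,x}$ with $1/p(\theta) = (1-\theta)/2 + \theta/3$. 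A short arithmetic check shows that one can take $\theta$ slightly less than $1 - 4\eps$ so that $\theta(1/2+) = 1/2 - 2\eps$ and $p(\theta) = 3/(1+\eps_1)$ simultaneously, the available slack being absorbed by the "$+$" in $1/2+$ and the "$+$" in $\eps_1 = 2\eps+$.

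For \eqref{Xsb6}, my plan is to bilinearly interpolate two endpoint bounds. The first is the sharp Bourgain--Ozawa--Tsutsumi estimate \eqref{Xsb4}, which for $M_1 \leq M_2$ reads
\[
\|\P_{M_1} v_1 \, \P_{M_2} v_2\|_{L^2_{t,x}} \les M_1^{3/2} M_2^{-1/2} \|\P_{M_1} v_1\|_{X^{0, 1/2+}} \|\P_{M_2} v_2\|_{X^{0, 1/2+}}.
\]
The second, obtained from H\"older, Bernstein \eqref{R3}, and $X^{0, 1/2+} \hookrightarrow L^\infty_t L^2_x$, is
\[
\|\P_{N_1} u_1 \, \P_{N_2} u_2\|_{L^2_{t,x}} \leq \|\P_{N_1} u_1\|_{L^\infty_{t,x}} \|\P_{N_2} u_2\|_{L^2_{t,x}} \les N_1^{2} \|\P_{N_1} u_1\|_{X^{0, 1/2+}} \|\P_{N_2} u_2\|_{X^{0, 0}}.
\]
When $N_1 \leq N_2$ I would apply the first bound with $(M_1, M_2) = (N_1, N_2)$; when $N_1 \geq N_2$, I would apply it with $(M_1, M_2) = (N_2, N_1)$ (the product being symmetric). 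In each case I would then interpolate against the second estimate, degrading only the $X^{0, 1/2+}$ regularity of $\P_{N_2} u_2$ down to $X^{0, 0}$. Choosing the interpolation parameter $\theta$ so that $\theta(1/2+) = 1/2 - 2\eps$, which forces $\theta$ close to $1 - 2\eps_1$, a direct computation of the resulting exponents $N_1^{3\theta/2 + 2(1-\theta)} N_2^{-\theta/2}$ in the first case and $N_1^{-\theta/2 + 2(1-\theta)} N_2^{3\theta/2}$ in the second reproduces the two formulas for $C(N_1, N_2)$.

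The main technical point is the legality of bilinear complex interpolation in the $b$-parameter of the $X^{0,b}$ scale. This is routine: conjugating $X^{s,b}$ by the Schr\"odinger group turns it into the standard Bessel potential space $H^b_t L^2_x$, on which bilinear complex interpolation (Calder\'on) applies. Apart from that, the only delicate aspect is the careful bookkeeping of the arbitrarily small "$+$" and "$-$" so that the indices $\eps_1 = 2\eps+$, $1/2+$, and the various exponents of $N_1$ and $N_2$ all come out with the stated signs.
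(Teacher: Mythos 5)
Your proposal is correct and follows essentially the same route as the paper: \eqref{Xsb5} is obtained by interpolating the $(q,r)=(3,3)$ Strichartz embedding \eqref{Xsb2} with $\|u\|_{L^2_{t,x}}=\|u\|_{X^{0,0}}$, and \eqref{Xsb6} by bilinearly interpolating the Bourgain--Ozawa--Tsutsumi bound \eqref{Xsb4} with the H\"older/Bernstein estimate $\|\P_{N_1}u_1\P_{N_2}u_2\|_{L^2_{t,x}}\les \|\P_{N_1}u_1\|_{L^\infty_{t,x}}\|\P_{N_2}u_2\|_{L^2_{t,x}}$ that places $u_2$ in $X^{0,0}$; your explicit exponent bookkeeping reproduces $C(N_1,N_2)$ exactly as stated.
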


\noi
\begin{proof}
The first estimate \eqref{Xsb5} follows from interpolating \eqref{Xsb2} with $q = r =3$
and $\| u\|_{L^2_{t, x}} = \|u\|_{X^{0, 0}}.$
The second estimate \eqref{Xsb6} follows from interpolating \eqref{Xsb4}
and
\[
\| \P_{N_1} u_1 \P_{N_2}u_2\|_{L^2_{t, x}}
 \leq\| \P_{N_1} u_1\|_{L^\infty_{t, x}}\| \P_{N_2}u_2\|_{L^2_{t, x}}
 \les \| \P_{N_1} u_1\|_{X^{2 +, \frac 12+}}\| \P_{N_2}u_2\|_{X^{0, 0}}.
\qedhere \]
%
\end{proof}

We are now ready to prove Proposition \ref{PROP:LWP}.

\begin{proof}[Proof of Proposition \ref{PROP:LWP}]
We only prove \eqref{nl0a} since  \eqref{nl0b} follows in a  similar manner.
By Lemma \ref{LEM:Xsb1} (i) and duality, we have
\begin{align}
\|\G v(t)\|_{X^{\s, b}}
& \les T^\theta \| \N
(\eta v+ \eta_{_{T}}z) \|_{X^{\s, b-1+\theta}} \notag \\
& = T^\theta \sup_{\|v_4\|_{X^{0, 1-b-\theta}}\leq1} \bigg|\iint_{\R\times \R^4}
\jb{\nb}^\s \big[ \N (\eta v+ \eta_{_{T}}z) \big] v_4 dx dt\bigg|.
\label{nl1}
\end{align}

\noi
We estimate the right-hand side of \eqref{nl1}
by performing  case-by-case analysis of expressions of the form:
\begin{align}
\bigg| \iint_{\R\times \R^4}
\jb{\nb}^\s ( w_1 w_2 w_3 )v_4 dx dt\bigg|,
\label{nl2}
\end{align}

\noi
where $\|v_4\|_{X^{0, 1-b-\theta}} \leq 1$
and $w_j= \eta v$ or $\eta_{_{T}} z$,  $j = 1, 2, 3$.
Before proceeding further, let us simplify some of the notations.
In the following, we drop the complex conjugate sign
since it plays no role.
Also, we often suppress
the smooth cutoff function $\eta$ (and $\eta_{_{T}}$) from
$w_j = \eta v$ (and $w_j = \eta_{_{T}} z$)
and simply denote them by $v_j$ (and $z_j$,  respectively).
Lastly, in most of the cases,
we dyadically decompose
 $w_j$, $j = 1, 2, 3$,
and $v_4$ such that their spatial frequency supports are $\{ |\xi_j|\sim N_j\}$
for some dyadic $N_j \geq 1$
but still denote them by $w_j$, $j = 1, 2, 3$, and $v_4$.

Let $b = \frac{1}{2} + \eps$
and $\theta = \eps $
for some small $\eps > 0$ (to be chosen later)
so that
$ 1- b - \theta = \frac 12 - 2 \eps$.
In the following, we set $\eps_1 = 2\eps+$.

\medskip
\noi
{\bf  Case (1):} $v v v$ case.

In this case, we do not need to  perform dyadic decompositions
and we divide the frequency spaces into
$\{|\xi_1| \geq |\xi_2|, |\xi_3|\}$,
$\{|\xi_2| \geq |\xi_1|, |\xi_3|\}$,
and
$\{|\xi_3| \geq |\xi_1|, |\xi_2|\}$.
Without loss of generality, assume that $|\xi_1| \ges |\xi_2|, |\xi_3|$.
By $L^3L^{\frac{6}{1-\eps_1}}L^{\frac{6}{1-\eps_1}}L^{\frac{3}{1+\eps_1}}$-H\"older's inequality
and Lemmata \ref{LEM:Xsb1} and  \ref{LEM:Xsb2},
we have
\begin{align*}
\bigg|\int_{\R\times \R^4} \jb{\nb}^\s v_1 v_2 v_3 v_4 dx dt \bigg|
& \leq \| \jb{\nb}^\s v_1\|_{L^3_{t, x}} \|v_2\|_{L^{\frac{6}{1-\eps_1}}_{t, x}}\|v_3\|_{L^{\frac{6}{1-\eps_1}}_{t, x}}
\|v_4\|_{L^{\frac{3}{1+\eps_1}}_{t, x}}\\
& \les
\prod_{j = 1}^3 \|v_j\|_{X^{\s, \frac{1}{2}+}} \|v_4\|_{X^{0, \frac{1}{2}-2\eps}}
\les \prod_{j = 1}^3 \|v_j\|_{X^{\s, b}}
\end{align*}

\noi
for $\s \geq 1 + \eps_1 = 1+ 2\eps+$.

\medskip

\noi
{\bf Case (2):} $zz z$ case. \quad

Without loss of generality, assume $N_3 \geq N_2 \geq N_1$.

\smallskip

\noi
{\bf $\bullet$  Subcase (2.a):} $N_2 \sim N_3$.

By $L^{\frac{6}{1-2\eps_1}}L^4L^4L^{\frac{3}{1+\eps_1}}$-H\"older's inequality
and
Lemmata \ref{LEM:Xsb1} and  \ref{LEM:Xsb2},
we have
\begin{align*}
\bigg|\int_{\R\times \R^4}  z_1 z_2 \jb{\nb}^\s z_3 v_4 dx dt \bigg|
& \les \|z_1\|_{L^\frac{6}{1-2\eps_1}_{t, x}} \|\jb{\nb}^\frac{\s}{2}z_2\big\|_{L^4_{t, x}}
\|\jb{\nb}^\frac{\s}{2} z_3 \|_{L^4_{t, x}}\| v_4 \|_{X^{0, \frac{1}{2}-2\eps}}. 
\end{align*}

\noi
Hence,
by Proposition \ref{PROP:R2},
the contribution to \eqref{nl2} in this case is at most
$\les R^3$
outside a set of probability
\begin{equation}
\leq C\exp\bigg(- c\frac{R^2}{T^\frac{1-2\eps_1}{3}\|\phi\|_{L^2}^2}\bigg)
+
C\exp\bigg(- c\frac{R^2}{T^\frac{1}{2}\|\phi\|_{H^s}^2}\bigg)
\label{ARP1}
\end{equation}

\noi
provided that $s > \frac{\s}{2}$. Note that $s$ needs to be strictly greater than $\frac \s 2$
due to the summations over dyadic blocks.
For the convenience of readers, we briefly show how this follows.
In summing $\|\jb{\nb}^\frac{\s}{2}\P_{N_3}z_3\|_{L^4_{t, x}}$ over dyadic blocks in $N_3$,
we have
\begin{align*}
\sum_{\substack{N_3 \geq 1\\\text{dyadic}}} \|\jb{\nb}^\frac{\s}{2}\P_{N_3}z_3\|_{L^4_{t, x}}
&\leq \Big(\sum_{N_3} N_3^{0-}\Big)^{\frac{3}{4}}
\|\jb{\nb}^{\frac{\s}{2}+}\P_{N_3}z_3\|_{\l^4_{N_3}L^4_{t, x}}\\
& = \Big(\sum_{N_3} N_3^{0-}\Big)^{\frac{3}{4}}
 \|\jb{\nb}^{\frac{\s}{2}+}\P_{N_3}z_3\|_{L^4_{t, x}\l^4_{N_3}}\\
& \leq\Big(\sum_{N_3 } N_3^{0-}\Big)^{\frac{3}{4}}
\|\jb{\nb}^{\frac{\s}{2}+}\P_{N_3}z_3\|_{L^4_{t, x}\l^2_{N_3}}
\les \|\jb{\nb}^{\frac{\s}{2}+}z_3\|_{L^4_{t, x}},
\end{align*}

\noi
where  the last inequality follows from the Littlewood-Paley theory.
By Proposition~\ref{PROP:R2} with $q = r = 4$,
 we obtain the second term in \eqref{ARP1} as long as $ s> \frac \s 2$.
 Moreover, while the terms with $z_1$ and $z_2$ also suffer a slight loss of derivative,
 we can hide the loss in $N_1$ and $N_2$ under the $z_3$-term since $N_3 \geq N_1, N_2$.
Similar comments 
also  apply in the sequel.

\medskip

\noi
{\bf $\bullet$  Subcase (2.b):} $N_3 \sim N_4 \gg N_1, N_2$.

\smallskip

\noi
$\circ$ \underline{Subsubcase (2.b.i):} $N_1, N_2 \ll N_3^\frac{1}{3}$.

We include the detailed calculation only in this case, with similar comments applicable in the following. By Lemmata \ref{LEM:Xsb1} (ii) and  \ref{LEM:Xsb2}, 
with  $ b = \frac 12 +$ and $\dl = 0+$, we have
\begin{align*}
\bigg|\int_{\R\times \R^4}   z_1 z_2& \jb{\nb}^\s  z_3 v_4 dx dt \bigg|
 \les \|z_1 \jb{\nb}^\s z_3\|_{L^2_{t, x}} \|z_2 v_4\|_{L^2_{t, x}}\\
& \les N_1^\frac{3}{2} N_3^{-\frac 12+\s} N_2^{\frac 32 + \eps_1 +\dl} N_4^{-\frac12 + \eps_1}
\prod_{j = 1}^3 \| z_j\|_{X^{0, b}}  \| v_4\|_{X^{0,\frac 12 - 2\eps }}\\
& \les N_1^{\frac{3}{2}-s} N_2^{\frac 32 + \eps_1 -s +\dl}N_3^{-\frac 12+\s - s}  N_4^{-\frac12 + \eps_1}
\prod_{j = 1}^3\| z_j\|_{X^{s, b}} \| v_4\|_{X^{0,\frac 12 - 2\eps }}\\
\intertext{By $N_1, N_2, \ll N_3^\frac{1}{3}$, $N_3 \sim N_4$,
and  Lemma \ref{LEM:Xsb1} (i), we have}
& \ll
T^{0-}N_3^{-\frac{5}{3}s + \s + \frac{4}{3} \eps_1 +\frac 13\dl}
\prod_{j = 1}^3 \|\P_{N_j}\phi^\o\|_{H^s}
 \| v_4\|_{X^{0,\frac 12 - 2\eps }}.
\end{align*}

\noi
Here, we lost a small power of $T$ in applying \eqref{Xsb1}.
Note that  such a  loss in $T$ can be hidden
under $T^\theta$ in \eqref{nl1} and does not cause a problem.
Now, we want the power of the largest frequency  $N_3$ to be strictly negative so that we can sum over dyadic blocks. This requires
\begin{equation}
\frac{5}{3} s >  \s + \frac{4}{3} \eps_1.
\label{nl3}
\end{equation}

\noi
Provided this condition holds, using Lemma \ref{LEM:Hs},  we see that
the contribution to \eqref{nl2} in this case is at most $\les T^{0-} R^3$
outside a set of probability
\begin{equation*}
\leq C\exp\bigg(-c \frac {R^2}{  \|\phi\|_{H^{s}}^{2}}\bigg).
\end{equation*}

\smallskip

\noi
$\circ$ \underline{Subsubcase (2.b.ii):} $N_2\ges N_3^\frac{1}{3} \gg N_1$.

By Lemmata \ref{LEM:Xsb1} and  \ref{LEM:Xsb2},
 we have
\begin{align*}
\bigg|\int_{\R\times \R^4} &  z_1 z_2 \jb{\nb}^\s z_3 v_4 dx dt \bigg|
 \les \|z_2\|_{L^4_{t, x}}
\|\jb{\nb}^\s z_3 \|_{L^4_{t, x}}\|z_1 v_4 \|_{L^2_{t, x}}\\
& \les T^{0-}
N_1^{
\frac{3}{2}  + \eps_1-s+} N_2^{-s}
N_3^{\s-s - \frac 12 +\eps_1}
\|\P_{N_1}\phi^\o\|_{H^s}
\prod_{j = 2}^3 \|\jb{\nb}^s z_j\|_{L^4_{t, x}}
\|v_4\|_{X^{0, \frac 12 - 2\eps}}.
\end{align*}

\noi
Hence,
by Lemma \ref{LEM:Hs} and Proposition \ref{PROP:R2},
the contribution to \eqref{nl2} in this case is at most
$\les T^{0-} R^3$
outside a set of probability
\begin{equation*}
\leq C\exp\bigg(-c \frac {R^2}{  \|\phi\|_{H^{s}}^{2}}\bigg)
+ C\exp\bigg(-c \frac {R^2}{  T^\frac{1}{2} \|\phi\|_{H^{s}}^{2}}\bigg)
\end{equation*}

\noi
provided that \eqref{nl3} is satisfied.

\smallskip

\noi
$\circ$ \underline{Subsubcase (2.b.iii):} $N_1, N_2\geq N_3^\frac{1}{3} $.

By $L^{\frac{9}{2-\eps_1}}L^{\frac{9}{2-\eps_1}}L^{\frac{9}{2-\eps_1}}L^{\frac{3}{1+\eps_1}}$-H\"older's inequality
and Lemmata \ref{LEM:Xsb1} and  \ref{LEM:Xsb2},
  we have
\begin{align*}
\bigg|\int_{\R\times \R^4}  z_1 z_2 \jb{\nb}^\s z_3 v_4 dx dt \bigg|
& \les N_3^{\s - \frac 53 s }
\prod_{j =1}^3 \big\| \jb{\nb}^s z_j\|_{L^\frac{9}{2-\eps_1}_{t, x}}
\|v_4\|_{X^{0, \frac 12 - 2\eps}}.
\end{align*}

\noi
Hence,
by  Proposition \ref{PROP:R2},
the contribution to \eqref{nl2} in this case is at most
$\les R^3$
outside a set of probability
\begin{equation*}
\leq  C\exp\bigg(-c \frac {R^2}{  T^\frac{4 - 2\eps_1 }{9} \|\phi\|_{H^{s}}^{2}}\bigg)
\end{equation*}

\noi
provided that
\begin{equation}
\frac{5}{3} s > \s.
\label{nl4}
\end{equation}

Therefore,
given $s > \frac{3}{5}$,
we  choose $\s = 1+$ and
$\eps = 0+$
 for Case (2)
such that
 \eqref{nl3} and  \eqref{nl4} are satisfied.

\medskip

\noi
{\bf Case (3):} $v v z$ case.

Without loss of generality,   assume $N_1 \geq N_2$.

\medskip

\noi
{\bf $\bullet$  Subcase (3.a):} $N_1 \ges N_3$.

By $L^3L^\frac{6}{1-\eps_1}L^\frac{6}{1-\eps_1}L^\frac{3}{1+\eps_1}$-H\"older's inequality
and
Lemmata \ref{LEM:Xsb1} and  \ref{LEM:Xsb2},
we have
\begin{align*}
\bigg|
\int_{\R\times \R^4} \jb{\nb}^\s  v_1 v_2 z_3 v_4 dx dt \bigg|
& \les  \|v_1\|_{X^{\s, \frac{1}{2}+}}
 \|v_2\|_{X^{1+\eps_1, \frac{1}{2}+}}
 \|z_3\|_{L^\frac{6}{1-\eps_1}_{t, x} }
\|v_4\|_{X^{0, \frac12 - 2\eps}}.
\end{align*}

\noi
Hence,
by  Proposition \ref{PROP:R2},
the contribution to \eqref{nl2} in this case is at most
$
\les R \prod_{j = 1}^2 \|v_j\|_{X^{\s, \frac{1}{2}+}} $
outside a set of probability
\begin{equation}
\leq  C\exp\bigg(-c \frac {R^2}{  T^\frac{1 - \eps_1 }{3} \|\phi\|_{H^{0+}}^{2}}\bigg)
\label{Z1}
\end{equation}

\noi
provided that  $\s > 1 + \eps_1 = 1+ 2\eps+$.
Note that we have
$\|\phi\|_{H^{0+}}$ instead of $\|\phi\|_{L^2}$ in \eqref{Z1}
due to the summation over $N_3$.

\medskip

\noi
{\bf $\bullet$  Subcase (3.b):} $N_3\sim N_4 \gg N_1 \geq N_2$.

By Lemmata \ref{LEM:Xsb1} and  \ref{LEM:Xsb2},
we have
\begin{align*}
\bigg|\int_{\R\times \R^4}  v_1  v_2 & \jb{\nb}^\s z_3  v_4 dx dt \bigg|
 \les \|v_1\|_{L^4_{t, x}} \|\jb{\nb}^\s z_3 \|_{L^4_{t, x}}\|v_2 v_4 \|_{L^2_{t, x}}\\
& \les N_2^{\frac32 + \eps_1 -\s+ } N_3^{\s - s}
N_4^{-\frac 12 +  \eps_1}
 \|v_1\|_{X^{\frac{1}{2}, \frac{1}{2}+}} \|v_2\|_{X^{\s, \frac 12+}} \|\jb{\nb}^{s} z_3\|_{L^4_{t, x}}
 \|v_4\|_{X^{0, \frac 12 - 2\eps}}\\
& \les
N_1^{2 - 2\s  +\eps_1+} N_3^{\s - s - \frac 12 +  \eps_1}
 \|v_1\|_{X^{\s, \frac{1}{2}+}} \|v_2\|_{X^{\s, \frac 12+}} \|\jb{\nb}^{s} z_3\|_{L^4_{t, x}}
  \|v_4\|_{X^{0, \frac 12 - 2\eps}}.
\end{align*}

\noi
Hence,
by  Proposition \ref{PROP:R2},
the contribution to \eqref{nl2} in this case is at most
$
\les R \prod_{j = 1}^2 \|v_j\|_{X^{\s, \frac{1}{2}+}} $
%
outside a set of probability
\begin{equation*}
\leq  C\exp\bigg(-c \frac {R^2}{  T^\frac{1}{2} \|\phi\|_{H^{s}}^{2}}\bigg)
\end{equation*}

\noi
provided that
$2 - 2\s  + \eps_1 < 0$ and
$ s > \s - \frac 12 +  \eps_1$.
Given $s > \frac{1}{2}$,
these conditions are satisfied by taking $\s = 1+$ and $\eps = 0+$.

\medskip

\noi
{\bf Case (4):} $v z z$ case.

Without loss of generality,   assume $N_3 \geq N_2$.

\medskip

\noi
{\bf $\bullet$  Subcase (4.a):} $N_1 \ges N_3$.

By $L^3L^\frac{6}{1-\eps_1}L^\frac{6}{1-\eps_1}L^\frac{3}{1+\eps_1}$-H\"older's inequality
and Lemmata \ref{LEM:Xsb1} and  \ref{LEM:Xsb2},
we have
\begin{align*}
\bigg|\int_{\R\times \R^4} \jb{\nb}^\s v_1 z_2 z_3 v_4 dx dt \bigg|
& \les  \|v_1\|_{X^{\s, \frac{1}{2}+}}
\|z_2\|_{L^\frac{6}{1-\eps_1}_{t, x}} \|z_3\|_{L^\frac{6}{1-\eps_1}_{t, x}} \|v_4\|_{X^{0, \frac 12 - \eps}}.
\end{align*}

\noi
Hence,
by  Proposition \ref{PROP:R2},
the contribution to \eqref{nl2} in this case is at most
$
\les R^2  \|v_1\|_{X^{\s, \frac{1}{2}+}} $
%
outside a set of probability
\begin{equation*}
\leq  C\exp\bigg(-c \frac {R^2}{  T^\frac{1-\eps_1}{3} \|\phi\|_{H^{0+}}^{2}}\bigg).
\end{equation*}

\medskip

\noi
{\bf $\bullet$  Subcase (4.b):} $N_3 \gg N_1$.

First, suppose that $N_2 \sim N_3$.
Then, by Lemmata \ref{LEM:Xsb1} and  \ref{LEM:Xsb2}
(after separating the argument into two cases: $N_1 \leq N_4$ or $N_1 \geq N_4$),
we have
\begin{align*}
\bigg|\int_{\R\times \R^4}  v_1 z_2 & \jb{\nb}^\s z_3 v_4 dx dt \bigg|
 \les \|\jb{\nb}^\frac{\s}{2}z_2\|_{L^4_{t, x}}
\|\jb{\nb}^\frac{\s}{2} z_3 \|_{L^4_{t, x}}\|v_1 v_4 \|_{L^2_{t, x}}\\
& \les
N_1^{1+2\eps_1 - \s+}
N_3^{\s - 2s}
\|v_1\|_{X^{\s, \frac12+}}
\|\jb{\nb}^s z_2\|_{L^4_{t, x}}
\|\jb{\nb}^s z_3 \|_{L^4_{t, x}} \|v_4 \|_{X^{0, \frac 12 -2\eps}}.
\end{align*}

\noi
Hence,
by  Proposition \ref{PROP:R2},
the contribution to \eqref{nl2} in this case is at most
$
\les R^2  \|v_1\|_{X^{\s, \frac{1}{2}+}} $
outside a set of probability
\begin{equation*}
\leq  C\exp\bigg(-c \frac {R^2}{  T^\frac{1}{2} \|\phi\|_{H^{s}}^{2}}\bigg)
\end{equation*}

\noi
provided that $\s >1+2\eps_1 $
and $s > \frac{1}{2}\s$.
Given $s > \frac{1}{2}$,
these conditions are satisfied by taking $\s = 1+$ and $\eps = 0+$.

Hence, it remains to consider the case $N_3 \sim N_4 \gg N_1, N_2$.

\smallskip

\noi
$\circ$ \underline{Subsubcase (4.b.i):} $N_1, N_2\ll N_3^\frac 13$.

By Lemmata \ref{LEM:Xsb1} and  \ref{LEM:Xsb2},
we have
\begin{align*}
\bigg|\int_{\R\times \R^4}  & v_1 z_2  \jb{\nb}^\s z_3 v_4 dx dt \bigg|
 \les \|v_1\jb{\nb}^\s z_3 \|_{L^2_{t, x}}\|z_2 v_4 \|_{L^2_{t, x}}\\
& \les
T^{0-}
N_1^{\frac 32 -\s} N_2^{\frac 32+\eps_1 - s+} N_3^{\s - s-\frac 12 }N_4^{-\frac 12 +  \eps_1}
\|v_1\|_{X^{\s, \frac 12 +}} \prod_{j = 2}^3 \|\P_{N_j}\phi^\o\|_{H^s}\|v_4\|_{X^{0, \frac 12 -2\eps}}\\
& \les
T^{0-}
N_3^{\frac 23 \s - \frac 43 s  + \frac 43\eps_1+}
\|v_1\|_{X^{\s, \frac 12 +}} \prod_{j = 2}^3 \|\P_{N_j}\phi^\o\|_{H^s}\|v_4\|_{X^{0, \frac 12 -2\eps}}.
\end{align*}

\noi
Hence,
by  Lemma \ref{LEM:Hs},
the contribution to \eqref{nl2} in this case is at most
$
\les T^{0-} R^2  \|v_1\|_{X^{\s, \frac{1}{2}+}} $
outside a set of probability
\begin{equation*}
\leq C\exp\bigg(-c \frac {R^2}{  \|\phi\|_{H^{s}}^{2}}\bigg)
\end{equation*}

\noi
provided that
\begin{equation}
s > \frac 12 \s + \eps_1.
\label{nl5}
\end{equation}

\noi
Given $s > \frac{1}{2}$,
this condition is satisfied by taking $\s = 1+$ and $\eps = 0+$.

\smallskip

\noi
$\circ$ \underline{Subsubcase (4.b.ii):} $N_1\ll N_3^{\frac 13} \les N_2$.

By Lemmata \ref{LEM:Xsb1} and  \ref{LEM:Xsb2},
we have
\begin{align*}
\bigg|\int_{\R\times \R^4}  v_1 z_2 & \jb{\nb}^\s z_3 v_4 dx dt \bigg|
 \les \|z_2\|_{L^4_{t, x}} \|\jb{\nb}^\s z_3 \|_{L^4_{t, x}}\|v_1 v_4 \|_{L^2_{t, x}}\\
& \les
N_1^{\frac32 +\eps_1 - \s+ } N_2^{-s} N_3^{\s - s - \frac 12 + \eps_1}
\|v_1\|_{X^{\s, \frac 12+}} \prod_{j = 2}^3 \|\jb{\nb}^s z_j\|_{L^4_{t, x}}\|v_4\|_{X^{0, \frac 12-2\eps}}\\
& \les
N_3^{\frac 23 \s - \frac 43 s  +\frac 43\eps_1+}
\|v_1\|_{X^{\s, \frac 12+}} \prod_{j = 2}^3 \|\jb{\nb}^s z_j\|_{L^4_{t, x}}\|v_4\|_{X^{0, \frac 12-2\eps}}.
\end{align*}

\noi
Hence,
by Proposition \ref{PROP:R2},
the contribution to \eqref{nl2} in this case is at most
$
\les R^2  \|v_1\|_{X^{\s, \frac{1}{2}+}} $
outside a set of probability
\begin{equation*}
\leq C\exp\bigg(-c \frac {R^2}{ T^\frac 12 \|\phi\|_{H^{s}}^{2}}\bigg)
\end{equation*}

\noi
provided that \eqref{nl5} is satisfied.

\smallskip

\noi
$\circ$ \underline{Subsubcase (4.b.iii):} $N_2 \ll N_3^{\frac 13} \les N_1$.

By Lemmata \ref{LEM:Xsb1} and  \ref{LEM:Xsb2},
we have
\begin{align*}
\bigg|\int_{\R\times \R^4}  & v_1 z_2  \jb{\nb}^\s z_3 v_4 dx dt \bigg|
 \les \|v_1\|_{L^3_{t, x}} \|\jb{\nb}^\s z_3 \|_{L^6_{t, x}}\|z_2 v_4 \|_{L^2_{t, x}}\\
& \les
T^{0-} N_1^{ - \s} N_2^{\frac 32 +\eps_1 - s+}
N_3^{\s - s -\frac 12+ \eps_1 }
\|v_1\|_{X^{\s, \frac {1}{2}+}} \|\P_{N_2}\phi^\o \|_{H^s}
\|\jb{\nb}^s z_3\|_{L^6_{t, x}}\|v_4\|_{X^{0, \frac 12-2\eps}}\\
& \les
T^{0-}
N_3^{\frac 23 \s - \frac 43 s  +\frac 43 \eps_1+}
\|v_1\|_{X^{\s, \frac {1}{2}+}} \|\P_{N_2}\phi^\o \|_{H^s}
\|\jb{\nb}^s z_3\|_{L^6_{t, x}}\|v_4\|_{X^{0, \frac 12-2\eps}}.
\end{align*}

\noi
Hence,
by Lemma \ref{LEM:Hs} and Proposition \ref{PROP:R2},
the contribution to \eqref{nl2} in this case is at most
$
\les T^{0-} R^2  \|v_1\|_{X^{\s, \frac{1}{2}+}} $
outside a set of probability
\begin{equation*}
\leq
C\exp\bigg(-c \frac {R^2}{  \|\phi\|_{H^{s}}^{2}}\bigg)
+
C\exp\bigg(-c \frac {R^2}{ T^\frac 13 \|\phi\|_{H^{s}}^{2}}\bigg)
\end{equation*}

\noi
provided that \eqref{nl5} is satisfied.

\smallskip

\noi
$\circ$ \underline{Subsubcase (4.b.iv):} $N_1, N_2\ges N_3^{\frac 13}$.

By $L^3L^\frac{6}{1-\eps_1}L^\frac{6}{1-\eps_1}L^\frac{3}{1+\eps_1}$-H\"older's inequality
and Lemmata \ref{LEM:Xsb1} and  \ref{LEM:Xsb2},
we have
\begin{align*}
\bigg|\int_{\R\times \R^4}  v_1 z_2 \jb{\nb}^\s z_3 v_4 dx dt \bigg|
& \les \|v_1\|_{L^3_{t, x}} \|z_2\|_{L^\frac {6}{1-\eps_1}_{t, x}}
\|\jb{\nb}^\s  z_3 \|_{L^\frac {6}{1-\eps_1}_{t, x}}\| v_4 \|_{L^\frac{3}{1+\eps_1}_{t, x}}\\
& \les
N_1^{- \s} N_2^{-s} N_3^{\s-s}
\|v_1\|_{X^{\s, \frac 12+}} \prod_{j = 2}^3 \|\jb{\nb}^s z_j\|_{L^\frac {6}{1-\eps_1}_{t, x}}\|v_4\|_{X^{0, \frac 12-2\eps}}\\
& \les
 N_3^{\frac 23 \s-\frac 43s}
\|v_1\|_{X^{\s, \frac 12+}} \prod_{j = 2}^3 \|\jb{\nb}^s z_j\|_{L^\frac {6}{1-\eps_1}_{t, x}}\|v_4\|_{X^{0, \frac 12-2\eps}}.
\end{align*}

\noi
Hence,
by  Proposition \ref{PROP:R2},
the contribution to \eqref{nl2} in this case is at most
$
\les R^2  \|v_1\|_{X^{\s, \frac{1}{2}+}} $
outside a set of probability
\begin{equation*}
\leq
C\exp\bigg(-c \frac {R^2}{ T^\frac{1-\eps_1}{3} \|\phi\|_{H^{s}}^{2}}\bigg)
\end{equation*}

\noi
provided that $s > \frac {1}{2}\s$.
Given $s> \frac 12$, this condition is satisfied by setting $\s = 1+$.
\end{proof}

\end{document}